\renewcommand{\1} {\mathbf{1}}
\renewcommand{\L} {\Lambda}
\newcommand{\ol} {\overline}
\def \w{\omega}
\def \k{\Bbbk}
\def \a{\alpha}
\def \b{\beta}
\def \ep{\epsilon}
\def \bt{\bowtie}
\def \du{^{\ast}}
\def \bidu{^{\ast\ast}}
\def \op{^{\operatorname{op}}}
\def \cop{^{\operatorname{cop}}}
\def \BC{\mathbb{C}}
\def \BZ{\mathbb{Z}}
\def \BQ{\mathbb{Q}}
\def \BN{\mathbb{N}}
\def \Vect{\operatorname{Vect}}
\def \F{\mathcal{F}}
\def \FF{\mathcal{F}_\sigma}
\def \f{\mathbb{F}}
\def \onu{\overline{\nu}}
\def \FSexp{\operatorname{FSexp}}
\def \Eexp{\operatorname{exp}}
\def \ord{\operatorname{ord}}
\newtheorem*{main1}{Theorem A}
\newtheorem*{main2}{Theorem B}
\def \inv{{^{-1}}}
\def \Ind{\operatorname{Ind}}
\def \Res{\operatorname{Res}}
\def \DD{\mathcal{D}}
\def \Id{\operatorname{Id}}
\def\TY{\mathcal{TY}}
\newcommand\Jac[2]{{#1 \overwithdelims () #2}}
\newcommand\mtx[1]{{\left[\begin{array}{c|c} #1
\end{array}\right]}}
\def \dim{\operatorname{dim}}
\def \Hom{\operatorname{Hom}}
\def \H{\operatorname{H}}
\def \Rep{\operatorname{Rep}}
\def \sgn{\operatorname{sgn}}
\def \e{\varepsilon}
\def \C{\mathbb{C}}
\def \o{\otimes}
\def \ev{\textrm{ev}}
\def \db{\textrm{db}}
\def \CC{\mathcal{C}}
\def \RR{\mathcal{R}}
\def \ZC{\mathcal{Z(C)}}
\newcommand\ptr{\operatorname{\underline{ptr}}}
\newcommand\ptrl{\ptr^\ell}
\newcommand\ptrr{\ptr^r}
\def \id{\operatorname{id}}
\def \Tr{\textrm{Tr}}
\def \exp{\textrm{FSexp}}
\def \v{\bar{\nu}}
\def \Irr{\operatorname{Irr}}
\numberwithin{equation}{section}
\newtheorem{theorem}{Theorem}[section]
\newtheorem{lemma}[theorem]{Lemma}
\newtheorem{proposition}[theorem]{Proposition}
\newtheorem{corollary}[theorem]{Corollary}
\newtheorem{definition}[theorem]{Definition}
\newtheorem{example}[theorem]{Example}
\newtheorem{remark}[theorem]{Remark}
\newtheorem{conjecture}[theorem]{Conjecture}
\begin{document}
\title[On Total Frobenius-Schur Indicators]{On Total Frobenius-Schur Indicators}
 \thanks{The first author was partially supported by the NSF grant number 11071111 and the second author was partially supported by NSF grant number DMS1001566.}
\author{Gongxiang Liu}
\address{Department of Mathematics, Nanjing University, Nanjing 210093, China}
\email{gxliu@nju.edu.cn}
\author{Siu-Hung Ng}
\address{Department of Mathematics, Iowa State University, Ames, IA 50011, USA}
\email{rng@iastate.edu}
\curraddr{Department of Mathematics, Cornell University, Ithaca, NY 14850, USA}

\date{}
\maketitle

\begin{abstract}
We define total Frobenius-Schur indicator for each object in a spherical fusion category $\mathcal{C}$ as a certain canonical sum of its higher indicators. The total indicators are invariants of spherical fusion categories. If $\mathcal{C}$ is the representation category of a semisimple quasi-Hopf algebra $H$, we prove that the total indicators are non-negative integers which satisfy a certain divisibility condition. In addition, if $H$ is a Hopf algebra, then all the total indicators are positive. Consequently, the positivity of total indicators is a necessary condition for a quasi-Hopf algebra being gauge equivalent to a Hopf algebra. Certain twisted quantum doubles of finite groups and some examples of Tambara-Yamagami categories are discussed for the sufficiency of this positivity condition.
\end{abstract}

\section{Introduction}
The representation category $\Rep(H)$  of a Hopf algebra $H$ is certainly important to the understanding of the algebraic structure of $H$. The monoidal structure of $\Rep(H)$ has also been playing important roles in other areas of mathematics and physics. For instance, the quantum invariants of knots, links or 3-manifolds constructed from certain Hopf algebras are actually determined by the monoidal structures of their representation categories \cite{Turaev}.

Quasi-Hopf algebras are generalizations of Hopf algebras whose representation categories are also monoidal categories. Two quasi-Hopf algebras are said to be \emph{gauge equivalent} if their representation categories are monoidally equivalent. For any finite-dimensional quasi-Hopf algebra $K$ over $\BC$, one can obtain another quasi-Hopf algebra $K^F$ by twisting  $K$ with a gauge transformation $F \in K \o K$ \cite{Kassel}, but $K^F$ and $K$ are gauge equivalent. In general, two finite-dimensional quasi-Hopf algebras $K$ and $H$ are gauge equivalent if, and only if, there exists a gauge transformation $F \in K \o K$ such that $K^F$ and $H$ are isomorphic as quasi-bialgebras (cf. \cite{EG02, NS0}). However, it is generally difficult to decide the gauge equivalence of two finite-dimensional quasi-Hopf algebras if their Grothendieck rings happen to be isomorphic.

For any finite group $G$, and a normalized $3$-cocycle $\w$ on $G$ with values in $\BC^\times$, one can construct a semisimple quasi-Hopf algebra $D^\w(G)$, called a \emph{twisted quantum double} of $G$ \cite{DPR}. Dijkgraaf, Pasquier and Roche  have asked the question whether $D^\w(G)$ is gauge equivalent to some Hopf algebra in \cite{DPR}. One can even ask a more general question: How can one determine whether a given finite-dimensional quasi-Hopf algebra $K$ over $\BC$ is gauge equivalent to some Hopf algebra? In view of the reconstruction theorem,  the question is equivalent to ask for  the existence of a \emph{fibration}, i.e. a $\BC$-linear, faithful and  exact monoidal functor $\F: \Rep(K) \to \Vect_\BC$, where $\Vect_\BC$ denotes the category of finite-dimensional $\BC$-linear spaces.

This simply stated question is generally difficult to answer. One can work out the simplest example $D^\w(\BZ_2)$, where $\w$ is a non-trivial 3-cocycle of $\BZ_2$. It is not completely obvious that $D^\w(\BZ_2)$ is not gauge equivalent to any Hopf algebra \cite{MN1}.

The $n$-th Frobenius-Schur (FS) indicator $\nu_n(V)$ of a representation $V$ of a finite group was introduced for
more than a century. It has been generalized to the representations of a semisimple (quasi-)Hopf algebra \cite{LM, MN05}, to the primary fields of a rational conformal field theory \cite{Bantay97}, and more generally to the objects of a pivotal categories \cite{NS2}. These indicators are preserved by equivalences of pivotal category [loc. cit.]. The arithmetical properties of the FS indicators also encrypt the structure of the underlying tensor categories as well as the quasi-Hopf algebras. For instance, the indicators of any complex representation of a finite group are integers.

If $V$ is an object of a spherical fusion category $\CC$ over $\BC$, it has been shown that the sequence  $\nu(V):=\{\nu_n(V)\}_{n \in \BN}$ of FS indicators is a periodic sequence of cyclotomic integers. Moreover, there exists a global period $N$ for all the higher indicator sequences \cite{NS1}. This global period, denoted by $\FSexp(\CC)$, is called the Frobenius-Schur exponent of $\CC$, and it is also an invariant of spherical fusion categories. The representation category $\Rep(H)$ of a semisimple quasi-Hopf algebra over $\BC$ is a spherical fusion category \cite{ENO}, and we simply denote the Frobenius-Schur exponent of $\Rep(H)$ by $\FSexp(H)$.

The FS indicators of a representation of semisimple Hopf algebra are not necessarily integers (see \cite{KSZ} for example). However, if we consider the total indicator $\onu(V)$ defined for any object $V$ in a spherical fusion category $\CC$ by
$$
\onu(V):=\sum_{n=1}^N \nu_n(V)
$$
where $N=\FSexp(\CC)$, then we have the following integrality and divisibility theorem for quasi-Hopf algebras.

\begin{main1}
 Let $H$ be a semisimple quasi-Hopf algebra over $\BC$. For any finite-dimensional $H$-module $V$, $\onu(V)$ is a non-negative integer which satisfies the divisibility
 $$
\FSexp(H) \mid (\dim H) \cdot \onu(V)\,.
 $$
\end{main1}
In addition, for semisimple Hopf algebras, we have obtained the positivity of total indicators:
\begin{main2}
 Let $H$ be a semisimple Hopf algebra $\BC$.
 Then, $\v(V) \ge \frac{N \dim V}{\dim H}$  for all finite-dimensional $H$-modules $V$, where $N=\FSexp(H)$. In particular, $\v(V)>0$ for any non-zero $H$-module $V$.
\end{main2}
Since $\FSexp(H)=\Eexp(H)$ when $H$ is a semisimple Hopf algebra,  Theorem A provides another perspective for a conjecture of Kashina \cite{Kas99, Kas00}: $\Eexp(H)$ divides $\dim H$ for any semisimple Hopf algebra $H$ over $\BC$. Moreover, Theorem B yields a necessary condition for a semisimple quasi-Hopf algebra being gauge equivalent to a Hopf algebra. However, this necessary condition may not be sufficient in general. There are integral Tambara-Yamagami categories which has no fibration but their total indicators are all positive. Nevertheless,  the positivity of total indicators is a necessary and sufficient for abelian twisted quantum doubles being gauge equivalent to Hopf algebras.

The organization of the paper is as follows: In Section 2, we introduce the definition of  total Frobenius-Schur indicators and prove Theorem A. Section 3 is devoted to the proof of Theorem B. In Section 4, we consider twisted quantum
doubles of $D^\w(G)$ of a finite abelian group $G$, and show that if $D^{\w}(G)$ is commutative, then $D^{\w}(G)$ is gauge equivalent to a Hopf algebra if, and only if, $\v(V)>0$ for all $V \in \Rep(D^\w(G))$. This provides an answer to a question of Dijkgraaf, Pasquier and Roche \cite[p69]{DPR}. In Section 5, we compute the total indicators for some integral Tambara-Yamagami fusion categories, and we characterize those admitting a fibration in terms of total indicators. As a consequence, semisimple quasi-Hopf algebras with positive total indicators but not gauge equivalent to any Hopf algebra are found.

\section{Total Frobenius-Schur indicators}

Throughout this paper, unless stated otherwise,  we will work over the field $\C$ of complex numbers;
every monoidal category $\CC$ in this paper is assumed to be $\BC$-linear abelian with finite-dimensional Hom-spaces over
$\C$ and a \emph{strict} simple unit object $\1_\CC$. All (quasi-)Hopf algebras are assumed to be semisimple and finite-dimensional over $\BC$. We denote by Rep$(H)$ the $\C$-linear monoidal category of finite-dimensional representations of a quasi-Hopf algebra $H$. The unit object of $\Rep(H)$, simply denoted by $\1_H$, is the $H$-module $\BC$ equipped with the trivial $H$-action.

In this section, we collect some conventions, and recall some basic definitions and facts for the discussions in the remaining sections.
 The readers may refer to \cite{BK,ENO,M} for the basic theory of tensor categories and
\cite{LM,KSZ,NS2,NS0} for Frobenius-Schur indicators.  The aim of this section is to introduce the definition of total Frobenius-Schur indicators (abbr. total
 indicators), and to prove Theorem A.

 Let $\CC$ be a left rigid monoidal category with tensor product $\o$. The left dual of $V \in \CC$ is a triple $(V^*, \db, \ev)$ in which $V^* \in \CC$, and $\db: \1 \to V\o V\du$ and $\ev:V\du \o V \to \1$  are respectively the associated dual basis and evaluation morphisms of the left dual. The left duality on $\CC$ can be extended to a monoidal equivalence $(-)\du: \CC\op \to \CC$ and hence $(-)\bidu: \CC \to \CC$ defines a monoidal equivalence. A pivotal structure on $\CC$ is an isomorphism $j: \Id \to (-)\bidu$ of monoidal functors, and the pair $(\CC, j)$ is a called a \emph{pivotal category}. We will simply say that $\CC$ is a pivotal category when the pivotal structure is understood without ambiguity.

In a pivotal category $(\CC,j)$, one can define the left and
right pivotal traces for any endomorphism $f: V \to V$ in $\CC$ as
\begin{gather*}
\ptrr(f):=\left(\1\xrightarrow{\db}V\o
V^{\ast}\xrightarrow{f\o V^{\ast}}V\o V^{\ast}\xrightarrow{j_V\o V^{\ast}}V^{\ast\ast}\o
V^{\ast}\xrightarrow{\ev}\1\right),
\\
\ptrl(f):=\left(\1\xrightarrow{\db}V^{\ast}\o V^{\ast\ast}\xrightarrow{V^{\ast}\o
j_V^{-1}}V^{\ast}\o V\xrightarrow{V^{\ast}\o f}V^{\ast}\o
V\xrightarrow{\ev}\1\right)
\end{gather*}
respectively. Note that these traces are scalars as $\1$ is simple. A \emph{spherical category} is a pivotal category $\CC$ in which $\ptrr(f) = \ptrl(f)$ for all endomorphisms $f \in \CC$. In this case, we simply write $\ptr$ for the functions $\ptrr$ as well as $\ptrl$, and  $d(V):=\ptr(\id_V)$ is called the \emph{pivotal dimension} of $V$. In addition, if $\CC$ is semisimple with finitely many simple objects up to isomorphism, then $\CC$ is called a \emph{spherical fusion category} (cf. \cite{ENO} for more details on fusion categories). In this case, $d(V)$ is a non-zero real number (cf. \cite{ENO}), and
the \emph{global dimension} $\dim \CC$ of $\CC$ is defined as
$$
\dim \CC = \sum_{V \in \Irr(\CC)} d(V)^2
$$
where $\Irr(\CC)$ denotes a complete set of non-isomorphic simple objects of $\CC$.

By M\"uger \cite{M1}, the center $\ZC$ is a \emph{modular tensor category}. In particular, the associated \emph{twist} (or \emph{ribbon structure}) $\theta$ has finite order \cite{BK, Va}. Moreover, the forgetful functor $F: \ZC \to \CC$ admits a two-sided adjoint $K: \CC \to \ZC$.

Let $V$ be an object in a pivotal category $(\CC, j)$ with associativity isomorphism $\Phi$. We write $V^{\o n}$ for the $n$-fold tensor power of $V\in \CC$ with rightmost parentheses. By coherence theorem, there is a unique isomorphism
$$
\Phi^{(n)}: V^{\o (n-1)} \o V \to V^{\o n}
$$
 which is a composition of tensor products of $\id$, $\Phi$ and $\Phi\inv$. One can define the $\BC$-linear
 operator $E_V^{(n)}: \CC(\1, V^{\o n}) \to  \CC(\1, V^{\o n})$ by setting
\begin{multline*}
 E_V^{(n)}(f):=\bigg(\1\xrightarrow{\db} V\du \o V\bidu \xrightarrow{(V\du\o f)\o V\bidu}
 (V\du \o V^{\o n}) \o V\bidu \xrightarrow{\Phi\inv \o j\inv} \\ ((V\du \o V) \o V^{\o (n-1)}) \o V
 \xrightarrow{(\ev \o V^{\o (n-1)}) \o V}V^{\o (n-1)} \o V \xrightarrow{\Phi^{(n)}} V^{\o n} \bigg)\,.
\end{multline*}
Following \cite[Sect. 3]{NS2}, the \emph{$n$-th Frobenius-Schur indicator} $\nu_{n}(V)$ of $V$ is defined as the scalar
$$\nu_{n}(V)=\Tr(E_{V}^{(n)}).$$
These indicators are proved to be invariant under pivotal equivalences, and $\nu_n(V)$ is a cyclotomic integer in $\BQ(\zeta_n)$ where $\zeta_n$ is a primitive $n$-th root of unity (cf. \cite{NS2}).

Since the antipode of a semisimple Hopf algebra $H$ is an involution \cite{LR88}, the representation category $\Rep(H)$ is a spherical fusion category in which the pivotal structure is the usual canonical isomorphism $j_V : V\to V\bidu$ of finite-dimensional vector spaces. In this case, the pivotal dimension $d(V)$ of $V \in \Rep(H)$ is the ordinary dimension of $V$. More generally, for a semisimple quasi-Hopf algebra $H$, there is a unique (spherical) pivotal structure on $\Rep(H)$ such that $d(V)$ is the ordinary dimension of $V$ for all $V \in \Rep(H)$ \cite{ENO}. Moreover, $\nu_n(V)$ can be expressed in terms of the associator, the quasi-antipode and the normalized integral $\L$ of $H$ (cf. \cite[Sect. 4]{NS0}). When $H$ is a Hopf algebra, we recover the $n$-th Frobenius-Schur indicator formula of $V$ introduced in \cite{LM}:
\begin{equation}\label{eq:FS1}
\nu_n(V) = \chi_V(\Lambda^{[n]})
\end{equation}
where $\chi_V$ is the character of $H$ afforded by $V$, $\L$ is the normalized integral of $H$ and $\L^{[n]}=\L_1 \L_2 \cdots \L_n$. Here, $x_1 \o \cdots \o x_n$ denotes the Sweedler notation of the $n$-fold comultiplication of $x \in H$.

The \emph{Frobenius-Schur exponent}, denoted by $\FSexp(\CC)$,  of a spherical category $\CC$ (cf. \cite{NS1}) is  the least positive
integer $n$ such that $\nu_{n}(V)=d(V)$ for all $V \in \CC$. If such an integer does not exist, $\FSexp(\CC)$ is defined to be $\infty$. However, the Frobenius-Schur exponent of a spherical fusion category is always finite because the following theorem proved in \cite[Thm. 4.1 and 5.5]{NS1}.
\begin{theorem}\label{t1}
  Let $\CC$ be a spherical fusion category, $\theta$ the twist of $\ZC$ and  $K: \CC \to \ZC$ the two-sided adjoint of the forgetful functor $F: \ZC \to \CC$. Then, for $V \in \CC$,
  \begin{enumerate}
  \item[(i)] $\displaystyle \nu_n(V) = \frac{1}{\dim \CC} \ptr \left(\theta_{K(V)}\right)$ for all $V \in \CC$, and \medskip
  \item[(ii)] $\displaystyle\FSexp(\CC)=\ord(\theta)$\,. \qed
  \end{enumerate}
\end{theorem}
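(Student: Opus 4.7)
The plan is to establish (i) first (with the power $n$ on $\theta$, which is what makes the statement $n$-dependent and (ii) nontrivial), and then to deduce (ii) by combining (i) with the dimension formula $d(K(V))=\dim\CC\cdot d(V)$ together with the non-degeneracy of $\theta$ on the modular center $\ZC$.

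For (i), the core idea is that $E_V^{(n)}$ is, morally, a cyclic rotation of the $n$ tensor factors of $\CC(\1,V^{\o n})$, and such cyclic rotations become ribbon twists after passage through the two-sided adjunction $F\dashv K\dashv F$. Concretely, I would first use the coend description $K(V)=\bigoplus_{X\in\Irr(\CC)}X\du\o V\o X$, equipped with its canonical half-braiding coming from permutation of the outer factors, to reinterpret $\CC(\1_\CC,V^{\o n})$ as a piece of $\ZC(\1_{\ZC},K(V)^{\o n})$. A diagrammatic computation then identifies one application of $E_V^{(n)}$, under this translation, with a single full twist on the strand labeled $K(V)$, which in the ribbon graphical calculus for the modular category $\ZC$ equals $\theta_{K(V)}$. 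Iterating $n$ times and taking the pivotal trace yields $\ptr(\theta_{K(V)}^n)$, while the prefactor $1/\dim\CC$ arises from the normalization intrinsic to the two-sided adjunction.

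Assuming (i), part (ii) splits into two bounds. For $\FSexp(\CC)\le\ord(\theta)$: put $N=\ord(\theta)$ so that $\theta_{K(V)}^N=\id$, giving $\ptr(\theta_{K(V)}^N)=d(K(V))=\dim\CC\cdot d(V)$ by the standard formula for the two-sided adjoint, hence $\nu_N(V)=d(V)$ for every $V\in\CC$. For the converse $\FSexp(\CC)\ge\ord(\theta)$, suppose $\nu_n(V)=d(V)$ for every $V$. Since $\theta$ acts as a scalar $\theta_Z$ on each simple $Z\in\Irr(\ZC)$, (i) yields trace identities $\sum_{Z}[K(V):Z]\,d(Z)(\theta_Z^n-1)=0$ for every simple $V\in\CC$. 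Combined with the modularity of $\ZC$, which ensures that simples of $\ZC$ are separated by pairings against induced objects $K(V)$, these identities force $\theta_Z^n=1$ for every $Z$, and hence $\theta^n=\id$.

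The main technical obstacle is the diagrammatic identification in (i): one must carefully track the bracketing isomorphisms $\Phi^{(n)}$, the pivotal isomorphism $j$, and the half-braiding on $K(V)$ through the adjunction, and verify that the composite coming from $E_V^{(n)}$ really produces the ribbon twist rather than $\theta^n$ plus some scalar error or spurious extra factor. Pinning down the normalization $1/\dim\CC$ exactly right likewise needs sphericality of $\CC$ together with modularity of $\ZC$. Once (i) is in this clean form, part (ii) is essentially bookkeeping, modulo the nondegeneracy input from the modular $S$-matrix that drives the converse in (ii).
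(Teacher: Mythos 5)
First, a point of comparison: the paper does not prove this theorem at all --- it is quoted verbatim from Ng--Schauenburg \cite[Thms.\ 4.1 and 5.5]{NS1} (note the terminal \qed), so your proposal must be measured against that proof. Note also that the displayed formula in (i) is a typo in the paper: the intended identity, and the one actually used in the proof of Proposition \ref{p1}, is $\nu_n(V)=\frac{1}{\dim\CC}\ptr\left(\theta^n_{K(V)}\right)$; you correctly work with this corrected version. For part (i), your plan --- realize $K(V)$ as $\bigoplus_{X\in\Irr(\CC)}X\du\o V\o X$ with its canonical half-braiding and identify $\Tr(E^{(n)}_V)$ with $\frac{1}{\dim\CC}\ptr(\theta^n_{K(V)})$ by graphical calculus --- is indeed the architecture of the cited proof, but as written it is a statement of intent: the entire content is the ``diagrammatic computation'' you defer, and two details of the sketch are off. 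The adjunction gives $\CC(\1,V^{\o n})\cong\ZC(\1,K(V^{\o n}))$, not an embedding into $\ZC(\1,K(V)^{\o n})$, and relating that space to powers of the twist on the single object $K(V)$ is precisely where the work lies; moreover $\nu_n(V)$ is the trace of a \emph{single} application of $E^{(n)}_V$, so the exponent $n$ on $\theta_{K(V)}$ must come out of closing up the order-$n$ rotation, not from ``iterating $n$ times.''

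The genuine gap is in your converse direction of (ii). From $\nu_n(V)=d(V)$ for all simple $V$ you obtain the identities $\sum_{Z\in\Irr(\ZC)}[F(Z):V]_{\CC}\,d(Z)\,(\w_Z^{\,n}-1)=0$, one for each $V\in\Irr(\CC)$. But this is a system of $|\Irr(\CC)|$ linear conditions on $|\Irr(\ZC)|$ quantities, and typically $|\Irr(\ZC)|>|\Irr(\CC)|$ (already for $\CC=\Rep(G)$ with $G$ nontrivial), so no appeal to modularity or to ``separation of simples by induced objects'' can force each coefficient $\w_Z^{\,n}-1$ to vanish by linear independence --- the columns of the matrix $\left([F(Z):V]_{\CC}\right)_{V,Z}$ cannot be independent. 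The standard repair is a positivity argument: pair the identities with $d(V)$ and use $\sum_{V}[F(Z):V]_{\CC}\,d(V)=d(F(Z))=d(Z)$ to get $\sum_{Z}d(Z)^2\,\w_Z^{\,n}=\sum_{Z}d(Z)^2$; since $d(Z)$ is a nonzero real number (so $d(Z)^2>0$) and $|\w_Z^{\,n}|=1$, taking real parts forces $\w_Z^{\,n}=1$ for every $Z$, hence $\theta^n=\id$. Note this needs only sphericality and the reality of dimensions in a fusion category over $\BC$, not the nondegeneracy of the $S$-matrix. Your forward bound $\FSexp(\CC)\le\ord(\theta)$ via $d(K(V))=\dim\CC\cdot d(V)$ is fine.
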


 For a simple object $V \in \CC$, it is known that $\nu_1(V) = 1$ if $V \cong \1$, and 0 otherwise. The second indicator of $V$ can only be $0,1, -1$ depending on whether $V$ is self-dual or not. By Theorem \ref{t1}, we know $\nu_N(V)=d(V)$ if  $N=\FSexp(\CC)$. The meaning of higher indicators of $V$ are more obscure and they are not rational integers in general (cf. \cite[Ex. 7.5]{KSZ}). The theorem implies that the indicator sequence $\nu(V):=\{\nu_n(V)\}_{n \in \BN}$ of $V$ is periodic for any object $V$ of a spherical fusion category $\CC$. Moreover, $\FSexp(\CC)$ is the global period of all the indicator sequences of $\CC$. The average value or the sum of these indicators over a period should also be an important invariant.
\begin{definition} Let $\CC$ be a spherical fusion category and $N=\FSexp(\CC)$. The total Frobenius-Schur indicator of $V\in \CC$, denoted by
$\v(V)$, is defined as
$$\v(V):=\sum_{i=1}^{N}\nu_{i}(V)\,.$$
\end{definition}

To prove Theorem A, we first derive a formula for the total indicator $\v(V)$ of an object $V$ in a spherical fusion category $\CC$ in terms of some data of the center $\ZC$. Recall from \cite[Proposition 8.1]{M1} that the forgetful functor $F:\ZC \to \CC$ has a two-sided adjoint $K: \CC\to \mathcal{Z(C)}$. It follows from the semisimplicity of $\CC$, we have
$$K(V)\cong \sum_{X\in \Irr(\ZC)}\dim(\ZC(K(V),X))\, X$$
for $V\in \CC$, where $\Irr(\mathcal{Z(C)})$ is a complete set of non-isomorphic simple objects of $\ZC$. For simplicity, we set
$$[U:V]_{\CC}:=\dim(\CC(U,V))\quad \text{ for all } U,V \in \CC\,.$$
Since $K$ is a left adjoint of $F$, $\ZC(K(V), X) \cong \CC(V, F(X))$ and so
$$
[K(V):X]_{\ZC} = [V: F(X)]_\CC
$$
for $X \in \ZC$ and $V \in \CC$.

Since $\dim (\CC(U,V)) = \dim (\CC(V,U)) = \dim (\CC(U\du ,V\du))$, we have
$$
[V:U]_\CC=[U:V]_\CC = [U\du, V\du]_\CC \text{ for all } U, V \in \CC.
$$
Thus, we have
\begin{equation}\label{eq:K}
K(V)\cong \sum_{X\in \Irr(\ZC)}[F(X):V]_\CC\, X\,.
\end{equation}

Let $\theta$ be the twist of $\ZC$. For any $X\in \Irr(\mathcal{Z(C)})$, we define $\omega_{X}\in \C$ by the equation
$$\theta_{X}=\w_{X}\id_{X}.$$
Note that $\w_{X}$ is an $N$-th root of unity by Theorem \ref{t1}.

\begin{proposition} \label{p1}
Let $\CC$ be a  spherical fusion categories over $\C$ with $\FSexp(\CC)=N$, and $V \in\CC$. Then
\begin{enumerate}
  \item[(i)] $\v(V)$ is an algebraic integer invariant under pivotal equivalence, i.e. if $\F: \CC\to \DD$ defines an equivalence of pivotal categories, then $\v(V) = \v(\F (V))$. \medskip
  \item[(ii)] Moreover,
\begin{equation}\label{eq:2.1}
  \v(V)=\frac{N}{\dim(\CC)}\sum_{\substack{X\in \Irr(\ZC)\\ \theta_X =\id_X}} [F(X):V]_{\CC}\,d(X)
\end{equation}
where $F:\ZC \to \CC$ is the forgetful functor.
\end{enumerate}
\end{proposition}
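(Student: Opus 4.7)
The plan is to dispose of (i) immediately from known invariance results, and for (ii) to combine Theorem \ref{t1} with the standard orthogonality of roots of unity.

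For part (i), I would appeal to \cite{NS2} to say that each $\nu_n(V)$ is a pivotal invariant and lies in the ring of integers of $\BQ(\zeta_n)$. The exponent $N=\FSexp(\CC)$ is itself determined by the indicator data (it is the smallest integer with $\nu_N(W)=d(W)$ for every $W\in\CC$), so $N$ is also a pivotal invariant. It follows that $\v(V)=\sum_{n=1}^N \nu_n(V)$ is an algebraic integer and is invariant under pivotal equivalence.

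For part (ii), the key step is to read Theorem \ref{t1}(i) in the form
\[
\nu_n(V)=\frac{1}{\dim\CC}\,\ptr\!\bigl(\theta_{K(V)}^{\,n}\bigr),
\]
decompose $K(V)$ via (\ref{eq:K}), and use that $\theta_X=\w_X\id_X$ acts as the scalar $\w_X$ on each simple $X\in\Irr(\ZC)$, so that $\ptr(\theta_X^{\,n})=\w_X^{\,n}\,d(X)$. Summing over $n=1,\dots,N$ then yields
\[
\v(V)=\frac{1}{\dim\CC}\sum_{X\in\Irr(\ZC)}[F(X):V]_{\CC}\,d(X)\Bigl(\sum_{n=1}^N \w_X^{\,n}\Bigr).
\]

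The final step is the standard root-of-unity orthogonality: by Theorem \ref{t1}(ii), $\theta^N=\id$, so every $\w_X$ is an $N$-th root of unity, and the inner geometric sum equals $N$ when $\w_X=1$ (equivalently $\theta_X=\id_X$) and $0$ otherwise. Substituting this back produces (\ref{eq:2.1}). The only delicate point is unambiguously reading the power $\theta^n$ in the trace formula of Theorem \ref{t1}(i); once that is in place, the remainder of the argument is entirely formal and there is no serious obstacle.
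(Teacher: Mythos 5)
Your proposal is correct and follows essentially the same route as the paper: part (i) from the pivotal invariance and integrality of the individual $\nu_n(V)$ together with the invariance of $\FSexp(\CC)$, and part (ii) by combining $\nu_n(V)=\frac{1}{\dim\CC}\ptr(\theta_{K(V)}^{\,n})$ with the decomposition \eqref{eq:K} and the orthogonality of $N$-th roots of unity. Your remark about reading the power $\theta^n$ in Theorem \ref{t1}(i) is apt --- the displayed formula there omits the exponent, but the paper's own proof uses $\ptr(\theta_{K(V)}^{\,n})$ exactly as you do.
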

\begin{proof} Statement (i) is an  immediate consequence of the fact that $\nu_n(V)$ are algebraic integers for all $n$, and that both $\nu_n(V)$ and  $\FSexp(\CC)$ are invariant under pivotal equivalences.\smallskip\\
(ii)
By Theorem \ref{t1}, we have
$$\nu_{n}(V)=\frac{1}{\dim(\CC)}\ptr(\theta_{K(V)}^{n})=\frac{1}{\dim(\CC)}\sum_{X\in \Irr(\mathcal{Z(C)})}[F(X):V]_{\CC}\,\w_{X}^{n}\,d(X).$$
Therefore,
\begin{eqnarray*}
\v(V)&=&\sum_{i=1}^{N}\nu_{i}(V)\\
&=&\frac{1}{\dim(\CC)}\sum_{i=1}^{N}\sum_{X\in \Irr(\ZC)}[F(X):V]_{\CC}\, \w_{X}^{i}\, d(X)\\
&=&\frac{1}{\dim(\CC)}\sum_{X\in \Irr(\ZC)}[F(X):V]_{\CC}\sum_{i=1}^{N}\,\w_{X}^{i}\, d(X)\\
&=&\frac{N}{\dim(\CC)}\sum_{\substack{X\in \Irr(\ZC)\\ \w_{X}=1}}[F(X):V]_{\CC}\, d(X).
\end{eqnarray*}
Here, the last equality follows from the fact that $\omega_{X}$ is an $N$-th root of unity.
\end{proof}
We can now prove Theorem A.
\begin{proof}[Proof of Theorem A]
Let $H$ be a semisimple quasi-Hopf algebra. Consider the canonical  pivotal structure on $\Rep(H)$. Then  $\Rep(H)$ is a spherical fusion category with  $d(V)=\dim V$ for all $V \in \Rep(H)$. In particular, $\dim (\Rep(H)) = \dim H$ and $d(X)$ is a non-negative integer for all $X \in \mathcal{Z}(\Rep(H))$. It follows from Proposition \ref{p1} (ii) that
$$
\v(V)=\frac{N}{\dim H}\sum_{\substack{X\in \Irr(\ZC)\\ \w_{X}=1}}[F(X):V]_{\CC}\, d(X)
$$
is a non-negative rational number.  By Proposition \ref{p1} (i), $\v(V)$ is a non-negative integer. Since
$$
\frac{\v(V)\dim H}{N}=\sum_{\substack{X\in \Irr(\ZC)\\ \w_{X}=1}}[F(X):V]_{\CC}\, d(X) \in \BZ\,,
$$
we establish the divisibility  $N \mid (\dim H) \,\v(V)$.
\end{proof}

If $H$ is a semisimple Hopf algebra, $\Eexp(H) = \ord(\theta)=\FSexp(H)$  (cf. \cite[Thm 2.5]{EGexp99} and Theorem \ref{t1}). Therefore, Theorem A is related to the following well-known conjecture considered by Kashina \cite{Kas99, Kas00}.

\begin{conjecture}\label{conj}
 Let $H$ be a semisimple  Hopf algebra over  $\BC$. Then the exponent of $H$ divides $\dim (H)$.
\end{conjecture}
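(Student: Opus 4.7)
Kashina's conjecture is a long-standing open problem, so a complete proof is almost certainly out of reach by direct manipulation of the material above; my proposal is therefore a strategy for making progress rather than a blueprint for a full proof. The natural attack is to leverage the divisibility $N \mid (\dim H)\,\v(V)$ of Theorem A across a whole family of modules simultaneously in order to force $N \mid \dim H$. If one could exhibit even a single $H$-module $V$ with $\gcd(\v(V), N) = 1$, the conjecture would follow immediately. The trivial module is useless since $\v(\1_H) = N$, so the first step is to scan the natural candidates: each simple $V \in \Irr(H)$, the regular representation, and the sums $\bigoplus_{V\in\Irr(H)} V$.

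Concretely, I would first specialise Proposition \ref{p1}(ii) to the regular representation. Decomposing $H \cong \bigoplus_V (\dim V)\, V$ and using $d(X) = \dim F(X)$ in $\Rep(H)$, one obtains
\begin{equation*}
\v(H) \;=\; \frac{N}{\dim H}\sum_{\substack{X\in\Irr(\ZC)\\ \w_X=1}} (\dim X)^2\,.
\end{equation*}
Thus $\v(H)/N$ measures the global dimension of the ``trivial-twist'' subcategory of $\ZC$. The plan is to compute the analogous class functions $V \mapsto \v(V)\dim H /N$ for all simple $V$, recognise them as the characters of an honest $\BZ$-linear combination of modules coming from $K(V)$, and then look for arithmetic relations among these non-negative integers $m_V := \v(V)\dim H/N$. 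Theorem B enters as the lower bound $m_V \ge \dim V$, which is compatible with the guess that the family $\{m_V\}$ refines the class equation of $H$ in a way that makes $\gcd_V m_V$ a unit modulo any prime dividing $N$ but not $\dim H$.

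The second step is to bring in the modular data of $\ZC$: the operator $\theta$ on $K(V)$ factors through the fusion subcategory of $\ZC$ generated by simples with $\w_X = 1$, and this subcategory is modular on its own. One would try to show that its global dimension divides $\dim \ZC = (\dim H)^2$ in a controlled way, so that $N$-divisibility of $(\dim H)\v(V)$ upgrades to $N$-divisibility of $\dim H$ itself. Equivalently, one wants the subgroup $\{\w_X : X \in \Irr(\ZC)\} \subseteq \mu_N$ to be \emph{all} of $\mu_N$ together with an equidistribution of $d(X)^2$ across its fibres; a Sylow-style reduction to each prime $p \mid N$ seems the cleanest phrasing.

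The main obstacle, and the precise reason the conjecture has resisted attack, is controlling the arithmetic of the multiplicities $[F(X):V]_\CC$ when $X$ ranges over the trivial-twist subcategory of $\ZC$. Theorem A alone only delivers $N \mid (\dim H)\cdot\gcd_V \v(V)$, and controlling this gcd is morally equivalent to the conjecture. Even the Etingof--Gelaki bound $\Eexp(H) \mid (\dim H)^3$ uses delicate numerical input from $\ZC$; squeezing the exponent from $3$ down to $1$ seems to require a genuinely new invariant, perhaps a refined total indicator attached to each prime $p \mid N$, that detects why non-trivial twists cannot concentrate their mass in a sub-lattice of $\Irr(\ZC)$ disjoint from $\Irr(H)$ under the adjunction $F \dashv K$.
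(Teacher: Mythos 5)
The statement you were asked to prove is labelled a \emph{conjecture} in the paper (Kashina's conjecture); the authors offer no proof of it, and it remains open. You correctly recognize this and submit a strategy rather than a proof, so there is no complete argument here to certify --- but two concrete points about the strategy are worth making.

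First, your opening move --- exhibit a single module $V$ with $\gcd(\v(V),N)=1$ so that $N\mid(\dim H)\,\v(V)$ forces $N\mid\dim H$ --- is precisely the observation the authors record immediately after stating the conjecture (using the Cauchy theorem for Hopf algebras, which says $\dim H$ and $\Eexp(H)$ have the same prime factors, to pass between ``coprime to $N$'' and ``coprime to $\dim H$''). They also immediately exhibit the obstruction: for the $8$-dimensional Kac algebra $K$ one has $\FSexp(K)=8$ and the total indicators of the five simples are $8,4,4,4,4$, so $\gcd\{\v(V)\mid V\in\Irr(K)\}=4$ shares a factor with $N$. Since $\v$ is additive (by Theorem 2.1(i) and additivity of $K$), the regular representation gives $\v(K)=8+4+4+4+2\cdot 4=28$, again sharing the factor $4$ with $N=8$; no direct sum of simples can do better. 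So the first step of your plan is already dead in dimension $8$. Your formula $\v(H)=\frac{N}{\dim H}\sum_{\theta_X=\id_X}(\dim X)^2$ is correct, but it does not rescue the approach.

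Second, your proposed next step rests on the assertion that the simples $X\in\Irr(\ZC)$ with $\theta_X=\id_X$ generate a fusion subcategory of $\ZC$ that is ``modular on its own'' with controlled global dimension. The set $\{X:\theta_X=\id_X\}$ is not closed under tensor product: the balancing axiom $\theta_{X\o Y}=(\theta_X\o\theta_Y)\circ c_{Y,X}\circ c_{X,Y}$ ties the twist of a product to the braiding, so the subcategory generated by trivial-twist simples can contain simples with nontrivial twist, and there is no reason for it to be modular or for its dimension to divide $(\dim H)^2$ in the way you need. This is exactly where a genuinely new idea would be required; as written, that step would fail. Your closing assessment --- that controlling $\gcd_V\v(V)$ is morally equivalent to the conjecture itself --- is a fair summary of why the paper stops at the remark rather than a proof.
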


By the Cauchy theorem for Hopf algebras \cite[Sect. 6]{KSZ} (see so \cite[Thm. 8.4]{NS1}), $\dim H$ and $\Eexp(H)$ have the same prime factors. Thus, if $\gcd\{\v(V)\mid V \in \Irr(H)\}$ is relatively prime to $\dim H$, then the conjecture will be proved for $H$.  However, the Kac algebra $K$ of dimension $8$ is an example in which $\gcd\{\v(V)\mid V \in \Irr(K)\}=4$ (see Example \ref{eg0}). Nevertheless, any upper bound of $\gcd\{\v(V)\mid V \in \Irr(H)\}$ will shed light on the conjecture.

By  Theorem A, $\v(V)=0$ is an extreme value and it is possible for some quasi-Hopf algebra demonstrated in the following example.

\begin{example} \label{eg1}
{\rm
Let $G$ be a finite group and
$\omega$ a normalized 3-cocycle on $G$ with coefficients in
$\BC^\times$. Following \cite[Sect. 7]{NS0}, one can construct a quasi-Hopf algebra
$H(G,\omega)=(\C[G]\du, \Delta, \varepsilon, \phi, \alpha, \beta, S)$ where
multiplication, identity, comultiplication $\Delta$, counit $\e$, and antipode $S$
are the same as the structure maps of the dual $\C[G]\du$ of the group algebra $\BC[G]$,
and $\phi$, $\alpha$, and $\beta$ are given by
$$
\phi=\sum_{a,b,c \in G}\omega(a,b,c)e(a) \o e(b) \o e(c), \quad \alpha=1,
\quad\mbox{and} \quad \beta=\sum_{a\in G}\omega(a, a^{-1}, a)^{-1}e(a)\,,
$$
where $\{e(x)\mid x \in G\}$ is the dual basis of $G$ for
$\C[G]^{\ast}$. If $\w' \in Z^3(G, \BC^\times)$ is cohomologous to $\w$, then $H(G, \w)$ can be twisted to the quasi-bialgebra $H(G,\w')$ by a gauge transformation (see \cite[XV]{Kassel} for gauge equivalence of quasi-bialgebras).
In particular, if $\w$ is a coboundary of the 2-cochain $f: G \times G \to \BC^\times$, then
$H(G, \w)$ can be twisted by the gauge transformation $F=\sum_{a,b \in G} f(a,b)\, e(a) \o e(b)$ to   the ordinary bialgebra $\BC[G]\du$.

We now consider a special case when $G$ is of order 2. Let $G=\{1,x\}$ be an abelian group of order 2 and $\omega$ a 3-cocycle
of $G$ given by
$$
\omega(a,b,c)=\left\{\begin{array}{rl}
-1 &\mbox{if }a=b=c=x,\\
1 & \mbox{otherwise}.
\end{array}\right.
$$
Then $H=H(G,\omega)$ is a 2-dimensional commutative quasi-Hopf
algebra. Let $V$ be the nontrivial simple $H$-module. As computed in \cite[Ex. 5.4]{NS1}, $\exp(H)=4$ and\
$\nu_{n}(V)=\cos(\frac{n\pi}{2})$. Therefore, $\v(V)=0$.
}
\end{example}

\section{Proof of Theorem B}
Example \ref{eg1} shows that $\v(V) = 0$ for some simple module $V$ of a semisimple quasi-Hopf algebra. However, this cannot  happen for semisimple Hopf algebras which is stated in Theorem B. We will prove this theorem in this section.

Recall that the Drinfeld double $D(H)$ of a finite-dimensional Hopf algebra $H$ is the bicrossproduct ${H^*}\cop \bt H$  (cf. \cite{Mont93bk, Kassel}).  In particular, $D(H)$ is a finite-dimensional Hopf algebra with  comultiplication and multiplication of $D(H)$ described by
$$
\Delta(f \bt a) = (f_2 \bt a_1) \o (f_1 \bt a_2)
$$
and
$$
(f \bt a)(g \bt b) = f  (g(S\inv(a_3)?a_1)) \bt a_2 b
$$
respectively, for $f,g \in H^*$ and $a,b \in H$. Here $a_1 \o a_2$ and $f_1 \o f_2$  are respectively the Sweedler notation for the comultiplications of $a \in H$ and $f \in H^*$ with the summation suppressed. The Hopf algebra $D(H)$ admits a canonical universal $\RR$-matrix $R = \sum_i \e \bt h_i \o h^i \bt 1$ where $\{h_i\}_i$ and $\{h^i\}_i$ are dual bases of $H$ and $H^*$ respectively.

It should be well-known to experts that $\Rep(D(H))$ and $\Rep(D(H^*))$ are equivalent braided monoidal categories. For the sake of completeness, we provide a proof of the statement in terms of gauge equivalence for  subsequent discussion.

\begin{lemma}  \label{l1}
  Let $H$ be a finite-dimensional Hopf algebra over any field $\k$, not necessarily semisimple. Then $D(H)$ and $D(H^*)$ are gauge equivalent quasi-triangular Hopf algebras via  the gauge transformation $R \in D(H^*) \o D(H^*)$, which is the universal $\RR$-matrix of $D(H^*)$, and the algebra isomorphism
  $$\sigma:\;D(H) \to D(H^*),\;\;\;\; f \bt a \mapsto (1 \bt f)(a \bt \e).$$
  Here, $H^{**}$ is identified with $H$ as Hopf algebras under the natural isomorphism $j: H \to H^{**}$ of vector spaces.
\end{lemma}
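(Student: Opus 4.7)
The plan is to verify four things in turn: (a) $\sigma:D(H)\to D(H^*)$ is an algebra isomorphism; (b) $R$ qualifies as a gauge transformation, i.e.\ $R$ is invertible and satisfies $(\e\o\id)R = 1 = (\id\o\e)R$; (c) on the coalgebra level, $(\sigma\o\sigma)\Delta_{D(H)}(x) = R\cdot \Delta_{D(H^*)}(\sigma(x))\cdot R\inv$ for all $x\in D(H)$; and (d) the R-matrices transform correctly, $(\sigma\o\sigma)R_{D(H)} = R_{21} R R\inv$.

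For (a), I would observe that $D(H^*)$ contains two Hopf subalgebras, $H^*\hookrightarrow D(H^*)$ via $f\mapsto 1\bt f$ and $H\hookrightarrow D(H^*)$ via $a\mapsto a\bt \e$ (identifying $H^{**}=H$ via $j$); similarly $D(H)$ contains the analogous copies $f\mapsto f\bt 1$ and $a\mapsto \e\bt a$. On each factor, $\sigma$ restricts to the evident identification; in particular $\sigma(f\bt 1) = 1\bt f$ and $\sigma(\e\bt a) = a\bt \e$, both being algebra embeddings. Since every element of $D(H)$ factors as $f\bt a = (f\bt 1)(\e\bt a)$, to conclude that $\sigma$ is an algebra map it remains to verify that the cross-relation $af = f(S\inv(a_3)?a_1)a_2$ valid in $D(H)$ is transported by $\sigma$ to the corresponding identity in $D(H^*)$ (where the roles of $H$ and $H^*$ are swapped and the coopposite is now on the $H$-slot). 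This is a direct Sweedler computation with the two multiplication formulas recalled in the excerpt and the pairing between $H$ and $H^*$. Bijectivity of $\sigma$ is immediate from dimension counting.

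For (b), write $R = \sum_i (1\bt h^i)\o (h_i\bt \e)$ in $D(H^*)\o D(H^*)$. The counit conditions follow from the dual basis identities $\sum_i h^i(1)h_i = 1_H$ and $\sum_i \e(h_i)h^i = \e$, while invertibility is the standard property of any universal R-matrix. For (c), because $R$ is the R-matrix of $D(H^*)$, quasi-triangularity gives $R\cdot \Delta_{D(H^*)}(y)\cdot R\inv = \Delta_{D(H^*)}\op(y)$ for every $y\in D(H^*)$. Hence condition (c) reduces to showing that $\sigma:D(H)\to D(H^*)\cop$ is a coalgebra map, i.e.\ $(\sigma\o\sigma)\Delta_{D(H)}(x) = \Delta_{D(H^*)}\op(\sigma(x))$. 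Using $\Delta_{D(H)}(f\bt a) = (f_2\bt a_1)\o(f_1\bt a_2)$ and the analogous formula for $\Delta_{D(H^*)}$, I would check the identity first on the two families of generators and then extend by multiplicativity, invoking (a). The subtle point is that the coopposite factor sits on the $H^*$-side in $D(H)$ but on the $H$-side in $D(H^*)$, and passing to $D(H^*)\cop$ exactly reverses this, so that Sweedler indices on both sides align.

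Finally for (d), with $F=R$ the twisted R-matrix equals $F_{21}RF\inv = R_{21}$, and the required identity $(\sigma\o\sigma)R_{D(H)} = R_{D(H^*),21}$ is immediate: the canonical R-matrix $R_{D(H)} = \sum_i (\e\bt h_i)\o (h^i\bt 1)$ maps under $\sigma\o\sigma$ to $\sum_i (h_i\bt \e)\o (1\bt h^i)$, which is exactly $R_{D(H^*),21}$. I expect the main obstacle to be step (c): the careful bookkeeping of which slot hosts the coopposite structure, and the matching of Sweedler indices between $\Delta_{D(H)}$ and $\Delta_{D(H^*)}\op$, is the most delicate piece; but once treated on generators, multiplicativity together with (a) finishes the verification.
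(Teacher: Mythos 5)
Your proposal is correct and follows essentially the same route as the paper: the key step in both is to use quasi-triangularity, $R\,\Delta_{D(H^*)}(y)\,R\inv=\Delta\op_{D(H^*)}(y)$, to reduce the gauge-equivalence condition to the statement that $\sigma:D(H)\to D(H^*)\cop$ is a bialgebra homomorphism, which is then checked directly. The paper dismisses that check (and the gauge-transformation axioms and the $R$-matrix compatibility, your steps (b) and (d)) as straightforward verification, so your write-up simply supplies details the authors leave to the reader.
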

\begin{proof}
Since $\Delta\op_{D(H^*)}(x) = R  \Delta_{D(H^*)}(x) R\inv$ for all $x \in D(H^*)$, it suffices to show that
$ \sigma:D(H) \to D(H^*)\cop$ is a bialgebra homomorphism, but this is straightforward verification.
\end{proof}

The algebra isomorphism $\sigma: D(H) \to D(H^*)$ defines a braided monoidal equivalence
$(\FF,\xi, \id): \Rep(D(H^*)) \to \Rep(D(H))$ (cf. \cite[XIII.3.2]{Kassel}). Here, $\FF: \Rep(D(H^*)) \to \Rep(D(H))$ is the $\BC$-linear functor with  $\FF(V)$ defined as the $D(H)$-module via the isomorphism $\sigma$ for $V \in \Rep(D(H^{\ast}))$, and $\FF:\Hom_{D(H^*)}(V, W) \to \Hom_{D(H)}(\FF V, \FF W)$ the identity function. The coherence isomorphism $\xi: \FF(V) \o \FF(W) \to \FF(V \o W)$ is the left action of $R\inv$, and  $\FF(\1_{D(H^*)}) = \1_{D(H)}$.

The Hopf algebra $H^*$ can be considered as a subalgebra of $D(H)$ and $D(H^*)$ via the embeddings $i_1: H^* \to D(H^*)$ and $i_2:H^* \to D(H)$ defined by
$$
i_1(f) = 1 \bt f, \quad i_2 (f) = f \bt 1
$$
respectively, for $f \in H^*$. It follows immediately from the definition of $\sigma$ that we have the commutative diagram of algebra maps:
\begin{equation}
\xymatrix{D(H) \ar[r]^-{\sigma} & D(H^*) \\
H^* \ar[u]^-{i_2} \ar[ru]_-{i_1} &
}  \,.
\end{equation}
This implies the following lemma for the two pairs of induction and restriction functors.
\begin{lemma}  \label{l2}
 Let $\Res_{H^*}^{D(H^*)}$ and $\Res_{H^*}^{D(H)}$ be the restriction functors along the embeddings $i_1$ and $i_2$ respectively, and $\Ind_{H^*}^{D(H^*)}$ and $\Ind_{H^*}^{D(H)}$ the associated induction functors. Then we have
\begin{equation}
  \FF \circ \Ind_{H^*}^{D(H^*)} = \Ind_{H^*}^{D(H)}\quad\text{and} \quad
   \Res_{H^*}^{D(H)} \circ \FF = \Res_{H^*}^{D(H^*)}\,. \qed
\end{equation}
\end{lemma}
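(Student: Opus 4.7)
The plan is to deduce both equalities directly from the commutative diagram of algebra maps displayed just before the lemma, namely $\sigma \circ i_2 = i_1$. Everything else is bookkeeping about how induction and restriction functors behave under pullback along an algebra isomorphism.

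First I would dispatch the restriction statement. Given $V \in \Rep(D(H^{\ast}))$, the functor $\FF$ leaves the underlying vector space of $V$ unchanged and defines the $D(H)$-action by $a\cdot v = \sigma(a)\cdot v$. Restricting this $D(H)$-action to $H^{\ast}$ along $i_2$ gives, for $f \in H^{\ast}$,
\[
f\cdot v = \sigma(i_2(f))\cdot v = i_1(f)\cdot v,
\]
which is precisely the $H^{\ast}$-action on $\Res_{H^{\ast}}^{D(H^{\ast})}(V)$. Thus $\Res_{H^{\ast}}^{D(H)} \circ \FF$ and $\Res_{H^{\ast}}^{D(H^{\ast})}$ agree on objects with the same underlying spaces, and since $\FF$ is the identity on morphisms, they agree as functors.

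For the induction statement I would proceed in parallel. The identity $\sigma \circ i_2 = i_1$ says exactly that $\sigma: D(H) \to D(H^{\ast})$ is an isomorphism of right $H^{\ast}$-modules, where the right $H^{\ast}$-action on $D(H)$ (resp.\ $D(H^{\ast})$) is right multiplication through $i_2$ (resp.\ $i_1$). It is simultaneously an isomorphism of left $D(H)$-modules, where $D(H)$ acts on $D(H^{\ast})$ by left multiplication after pullback along $\sigma$. Tensoring on the right with any $H^{\ast}$-module $W$ therefore yields a $D(H)$-linear isomorphism
\[
\sigma \o \id : D(H) \o_{H^{\ast}} W \;\xrightarrow{\;\cong\;}\; D(H^{\ast}) \o_{H^{\ast}} W,
\]
which is exactly the canonical natural isomorphism $\Ind_{H^{\ast}}^{D(H)}(W) \cong \FF(\Ind_{H^{\ast}}^{D(H^{\ast})}(W))$. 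Alternatively, once the restriction identity is in hand, one can invoke uniqueness of left adjoints: both $\FF \circ \Ind_{H^{\ast}}^{D(H^{\ast})}$ and $\Ind_{H^{\ast}}^{D(H)}$ are left adjoint to $\Res_{H^{\ast}}^{D(H^{\ast})} = \Res_{H^{\ast}}^{D(H)} \circ \FF$, because $\FF$ itself is an equivalence (hence its own two-sided adjoint up to isomorphism via $\sigma\inv$).

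I do not anticipate a substantive obstacle. The only point requiring mild care is reading the equality signs as canonical natural isomorphisms of functors rather than strict equality: in the restriction case the isomorphism is literally the identity on underlying spaces, while in the induction case it is $\sigma \o \id$. Both are natural in the $H^{\ast}$-module argument, which finishes the proof.
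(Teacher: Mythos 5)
Your proposal is correct and follows exactly the route the paper intends: the authors give no written proof, asserting only that the lemma "follows" from the commutative diagram $\sigma\circ i_2=i_1$, and your verification of the restriction identity on underlying spaces together with the induced isomorphism $\sigma\o\id$ on $D(H)\o_{H^*}W$ (or the adjoint-uniqueness shortcut) is precisely the bookkeeping they leave to the reader. No gaps.
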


The preceding lemmas hold for any finite-dimensional Hopf algebras. We now turn to semisimple Hopf algebras $H$. In this case, $\CC=\Rep(H)$ is a spherical fusion category, and the \emph{right} center $\ZC$ of $\CC$ is equivalent to the $\Rep(D(H))$ as braided monoidal category. Equipped with the canonical pivotal structure, $\Rep(D(H))$ is a modular tensor category with the twist $\theta$ given by the action of the Drinfeld element of $D(H)$. The forgetful functor $F: \ZC \to \CC$ is the restriction $\Res_{H}^{D(H)}$ which has a left adjoint $\Ind_{H}^{D(H)} :\CC \to \ZC$. Thus, by the uniqueness of adjoint functor, $K: \ZC \to \CC$ is equivalent to $\Ind_{H}^{D(H)}$. If $V$ is a self-dual $H$-module, then so is $K(V)$.  This observation even holds for spherical fusion categories.
\begin{lemma} \label{l3}
Let $\CC$ be a spherical fusion category, $\ZC$ the center of $\CC$ and $K: \CC \to \ZC$ the left adjoint of the forgetful functor $F: \ZC \to \CC$. Then $K(V)$ is a self-dual object of $\ZC$ whenever $V \in \CC$ is self-dual. In particular, $K(\1_\CC)$ is self-dual. Moreover, if $\theta$ is the twist of $\ZC$, then $\theta_{K(\1_\CC)} =\id_{K(\1_\CC)}$.
\end{lemma}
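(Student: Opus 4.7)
The plan is to prove the two assertions in sequence. First I would establish self-duality of $K(V)$ by examining the simple-object decomposition~(\ref{eq:K}); then I would show $\theta_{K(\1_\CC)}=\id$ by combining the identity $\nu_n(\1_\CC)=1$ with the trace formula of Theorem~\ref{t1}(i) and a Fourier-averaging argument over $n$.

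For self-duality, apply $(-)^*$ to~(\ref{eq:K}) and relabel simples by $X\mapsto X^*$ to obtain $K(V)^*\cong\bigoplus_{X\in\Irr(\ZC)}[F(X^*):V]_\CC\, X$. Since $F$ is monoidal, $F(X^*)\cong F(X)^*$, and the duality relations displayed just before~(\ref{eq:K}) give $[F(X)^*:V]_\CC=[V^*:F(X)]_\CC$. Invoking $V\cong V^*$ rewrites the chain as $[F(X^*):V]_\CC=[F(X):V]_\CC$, so the simple decompositions of $K(V)$ and $K(V)^*$ coincide and hence $K(V)\cong K(V)^*$. Applied to the (trivially self-dual) unit $\1_\CC$, this gives the self-duality of $K(\1_\CC)$.

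For the twist, the key input is $\nu_n(\1_\CC)=1$ for all $n\geq 1$: the space $\CC(\1_\CC,\1_\CC^{\o n})$ is one-dimensional and $E_{\1_\CC}^{(n)}$ reduces to $\id_{\1_\CC}$ by coherence. Theorem~\ref{t1}(i) then yields $\ptr\bigl(\theta_{K(\1_\CC)}^n\bigr)=\dim\CC$ for every $n\geq 1$. Writing $K(\1_\CC)\cong\bigoplus_X m_X X$ with $\theta_X=\omega_X\id_X$, this reads
\[
\sum_{X\in\Irr(\ZC)} m_X\,\omega_X^n\, d(X)=\dim\CC\qquad (n\geq 1).
\]
Choosing $n=N$ (where $\omega_X^N=1$) gives $\sum_X m_X d(X)=\dim\CC$, while summing over $n=1,\ldots,N$ and using $\sum_{n=1}^N\omega_X^n=N$ if $\omega_X=1$ and $0$ otherwise gives $\sum_{\omega_X=1}m_X d(X)=\dim\CC$. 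Subtracting yields $\sum_{\omega_X\ne 1}m_X d(X)=0$; since the $m_X$ are non-negative integers and the simple dimensions in $\ZC$ are positive, this forces $m_X=0$ whenever $\omega_X\ne 1$. Hence every simple summand of $K(\1_\CC)$ carries $\omega_X=1$, i.e.\ $\theta_{K(\1_\CC)}=\id_{K(\1_\CC)}$.

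The main obstacle is the final positivity step: in a general spherical fusion category the simple dimensions are only guaranteed to be nonzero reals, so the jump from $\sum_{\omega_X\ne 1} m_X d(X)=0$ to $m_X=0$ relies on $d(X)>0$ for simples of $\ZC$. This is automatic in all the Hopf-algebraic settings relevant to the paper, and more generally whenever the center admits the canonical pseudo-unitary pivotal structure.
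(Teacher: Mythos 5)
Your argument for the self-duality of $K(V)$ is essentially the paper's: both apply $(-)^*$ to the decomposition \eqref{eq:K}, relabel the simples via $X\mapsto X^*$, use monoidality of $F$ and the duality identities for multiplicities recorded before \eqref{eq:K}, and invoke $V\cong V^*$. That part is correct and needs no comment.

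The twist assertion is where you diverge, and where there is a genuine gap. The paper simply quotes \cite[Prop.~2.8(iii)]{NS3}, which states that $[F(X):\1_\CC]_\CC\ne 0$ forces $\w_X=1$ in an arbitrary spherical fusion category, after which $\theta_{K(\1_\CC)}=\id$ is immediate from \eqref{eq:K}. Your replacement --- computing $\nu_n(\1_\CC)=1$, feeding this into Theorem~\ref{t1}(i), and averaging over $n=1,\dots,N$ --- correctly yields $\sum_{\w_X\ne 1} m_X\, d(X)=0$ with $m_X=[F(X):\1_\CC]_\CC$. But the final step, deducing $m_X=0$ whenever $\w_X\ne 1$, needs $d(X)>0$ for every simple $X$ of $\ZC$, and this is not available in the stated generality: the paper only guarantees that $d(X)$ is a nonzero real, and centers of non-pseudo-unitary spherical fusion categories (e.g.\ the Galois conjugate of the Fibonacci category) do contain simple objects of negative dimension, so the signed sum can vanish by cancellation. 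You flag this yourself, but flagging it does not close it --- the lemma is stated for arbitrary spherical fusion categories, and it is later applied to $\Rep(D(H^*))$ where positivity does hold, so your argument suffices for the paper's application but not for the lemma as stated. To repair it you must either restrict the hypothesis to the pseudo-unitary case or import the multiplicity-implies-trivial-twist statement from \cite{NS3}, as the paper does.
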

\begin{proof}
Note that for $X \in \Irr(\ZC)$, $X\du$ is isomorphic to a unique object of $\Irr(\ZC)$. Let $V \in \CC$ be self-dual. It follows from \eqref{eq:K} that
\begin{eqnarray*}
  K(V)^* & \cong & \sum_{X \in \Irr(\ZC)} [F(X):V]_\CC\, X^* \\
         & \cong & \sum_{X \in \Irr(\ZC)} [F(X\du): V]_\CC\, X \\
         & \cong & \sum_{X \in \Irr(\ZC)} [F(X)\du: V]_\CC\, X \\
         & \cong & \sum_{X \in \Irr(\ZC)} [F(X)\du: V\du]_\CC\, X \\
         & \cong & \sum_{X \in \Irr(\ZC)} [F(X):V]_\CC\, X \\
         & \cong & K(V)\,.
\end{eqnarray*}
Here, the third isomorphism is a consequence of the fact that the forgetful functor $F:\ZC \to \CC$ defines a monoidal functor, and the last isomorphism follows from the remark preceding Proposition \ref{p1}. Since $\1_\CC$ is self-dual, and so is $K(\1_\CC)$.

By \cite[Prop. 2.8 (iii)]{NS3},  $[F(X):\1_\CC]_\CC \ne 0$ implies $\w_X = 1$. Therefore,
$$
\theta_{K(\1_\CC)} =  \sum_{X \in \Irr(\ZC)} [F(X):\1_\CC]_\CC\, \theta_X = \sum_{X \in \Irr(\ZC)} [F(X):\1_\CC]_\CC\, \id_X =\id_{K(\1_\CC)}\,.
$$
This proves the last assertion.
\end{proof}
We can now prove Theorem B.
\begin{proof}[Proof of Theorem B] Let $\CC=\Rep(H)$. In view of Proposition \ref{p1} (ii), it suffices to show the inequality
$$
\sum_{\substack{X \in \Irr(D(H)) \\ \theta_X=\id_X}}[\Res_H^{D(H)} X: V]_\CC \dim (X) \ge \dim (V)
$$
for all   $V \in \CC$. This is equivalent to show that $[\Res_H^{D(H)} W: V]_\CC > 0$ for some  $D(H)$-module $W$ satisfying $\theta_W =\id_W$.

Recall that $H^*$ is also a semisimple Hopf algebra \cite{LR88}. By Lemma \ref{l3}, $Y = \Ind_{H^*}^{D(H^*)}(\1_{H^*})$ is a self-dual $D(H^*)$-module and $\theta_Y =\id_Y$. Moreover,
\begin{equation}
  Y \cong \sum_{U \in \Irr(D(H^*))} [U:\1_{H^*}]_{\CC'} U
\end{equation}
where $\CC'=\Rep(H^*)$.
 Since $\Rep(D(H^*))$ and $\Rep(D(H))$ are spherical categories equipped with their canonical pivotal structures, $\FF:\Rep(D(H^*))\to \Rep(D(H))$ defines an equivalence of braided monoidal categories as well as pivotal categories. Therefore, by \cite[Prop. 6.2]{GMN}, we have
$$
\id_{\FF Y} = \FF (\id_Y)= \FF (\theta_Y)  = \theta_{\FF Y}\,.
$$
In particular,  $W=\FF Y$  is a self-dual $D(H)$-module satisfying $\theta_W=\id_W$. Since $\Ind_H^{D(H)}$ is a left adjoint of $\Res_H^{D(H)}$,  we have
$$
[V: \Res_H^{D(H)}W]_\CC = [\Ind_{H}^{D(H)} V: W]_\ZC\,.
$$
 However, by Lemma \ref{l2}, $W = \Ind_{H^*}^{D(H)}\1_{H^*}$. Therefore,
\begin{eqnarray*}
  \Hom_{D(H)}\left(\Ind_{H}^{D(H)} V,\, W\right) & = & \Hom_{D(H)}\left(\Ind_{H}^{D(H)} V,\, \Ind_{H^*}^{D(H)}\1_{H^*}\right) \\
  & \cong & \Hom_{D(H)}\left(\left(\Ind_{H^*}^{D(H)}\1_{H^*}\right)\du \o \left(\Ind_{H}^{D(H)} V\right), \1_{D(H)}\right) \\
  & \cong & \Hom_{D(H)}\left(\left(\Ind_{H^*}^{D(H)}\1_{H^*}\right) \o \left(\Ind_{H}^{D(H)} V\right), \1_{D(H)}\right)\,.
\end{eqnarray*}
By \cite[Thm. 8 (1)]{Bu}, we have
$$
\left(\Ind_{H^*}^{D(H)}\1_{H^*}\right) \o \left(\Ind_{H}^{D(H)} V\right) \cong D(H)^{\dim V}
$$
as $D(H)$-modules. Thus,
$$
[\Ind_{H}^{D(H)} V: W]_\ZC = [D(H)^{\dim V}: \1_{D(H)}]_\ZC = \dim V,
$$
and so $[V: \Res_H^{D(H)} W]_\CC =\dim V$. This completes the proof.
\end{proof}

\begin{definition}
{\rm
 A quasi-Hopf algebra $H$ is said to be \emph{genuine}  if $H$ is not gauge equivalent to any ordinary Hopf algebra.
 }
\end{definition}
Theorem B provides a sufficient condition for a semisimple quasi-Hopf algebra being genuine.

\begin{corollary}  \label{c3.5}
Let $H$ be a semisimple quasi-Hopf algebra. If there exists $V\in \Rep(H)$ such that $\v(V)=0$, then $H$ is a genuine quasi-Hopf algebra.
\end{corollary}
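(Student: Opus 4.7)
The plan is to argue by contraposition using Theorem B and the pivotal invariance of the total indicator recorded in Proposition \ref{p1}(i). Implicit in the hypothesis is that the witness $V$ is nonzero (otherwise all $\nu_i(V)=0$ trivially and the statement would be vacuous); I will phrase the argument accordingly.

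So, suppose toward contradiction that $H$ is gauge equivalent to an ordinary semisimple Hopf algebra $H'$. First I would package the gauge equivalence as a $\BC$-linear monoidal equivalence $\mathcal{F}:\Rep(H)\to\Rep(H')$. Because both sides are spherical fusion categories whose canonical pivotal structures are uniquely characterized by the requirement that $d(X)=\dim X$ for every object $X$ (as recalled in Section 2, citing \cite{ENO}), any such monoidal equivalence automatically intertwines these canonical pivotal structures; hence $\mathcal{F}$ is in fact a pivotal equivalence. Consequently, Proposition \ref{p1}(i) applies and gives
\[
\v(V) \;=\; \v\bigl(\mathcal{F}(V)\bigr).
\]

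Now I would invoke Theorem B for the genuine Hopf algebra $H'$: since $V\ne 0$ forces $\mathcal{F}(V)\ne 0$, we obtain
\[
\v\bigl(\mathcal{F}(V)\bigr) \;\ge\; \frac{\FSexp(H')\,\dim \mathcal{F}(V)}{\dim H'} \;>\; 0,
\]
contradicting the assumption $\v(V)=0$. Therefore no such Hopf algebra $H'$ exists, i.e.\ $H$ is genuine.

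The only nontrivial point is the pivotal compatibility in the first step; apart from that, the corollary is a direct combination of the invariance statement in Proposition \ref{p1}(i) with the strict positivity Theorem B. I do not anticipate any hidden obstacle, since the uniqueness of the canonical spherical structure on the representation category of a semisimple quasi-Hopf algebra is precisely what is needed.
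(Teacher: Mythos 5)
Your argument is correct and follows essentially the same route as the paper's proof: assume a gauge equivalence, note that the induced monoidal equivalence of representation categories preserves the canonical pivotal structures, apply Proposition \ref{p1}(i) to transport the total indicator, and conclude positivity from Theorem B. The extra remark that the witness $V$ must be nonzero is a harmless clarification the paper leaves implicit.
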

\begin{proof} Assume contrary. Then $H$ is gauge equivalent to a Hopf algebra $H'$. There exists a monoidal equivalence $\F: \Rep(H) \to \Rep(H')$ (see \cite[Thm. 2.2]{NS0}). Thus, $\F$ preserves their canonical pivotal structures (cf. \cite{NS2}).
By Proposition \ref{p1} (i) and Theorem B, we have
$$
\v(V) = \v(\F(V)) > 0
$$
for all $V \in \Rep(H)$.
\end{proof}

The following example shows that the existence of vanishing total indicators can also be a necessary condition for a genuine quasi-Hopf algebra.

\begin{example} \label{eg0}
{\rm There are exactly four gauge inequivalent $8$-dimensional quasi-Hopf algebras with five simple objects $\{a_1, a_2, a_3, a_4, m\}$ and fusion rules given by
$$
\{a_1, a_2, a_3, a_4 \} \cong \BZ_2 \times \BZ_2,\quad m^2 \cong \sum_{i=1}^4 a_i \quad \text{and}\quad  m a_i \cong a_i m \cong m
$$
for $i=1,2,3,4$. Their categories of representations are Tambara-Yamagami categories \cite{TY}. They are
$\C[Q],\C[D]$, the $8$-dimensional Kac algebra $K$ and its twisted version $K_{u}$ (see \cite[Sec.6]{NS0} for details), where $Q$ and $D$ are, respectively,
the quaternion group and the dihedral group of order $8$. We can assume $a_1$ to be the unit object. Then $\nu_n(a_1)=1$ for all $n \ge 1$. The objects $a_2, a_3$ and $a_4$ are  1-dimensional representations of these algebras, and their orders are 2. The indicator $\nu_n(a_i) = 1$ if $n$ is even, and 0 otherwise, for $i=2,3,4$. Thus $\v(a_i)> 0$ for $i=1,2,3,4$. By \cite[Sec. 6]{NS0}, we find the following table:
$$
\begin{array}{|c|c|c|c|c|c|c|c|c|}
\hline
 & \nu_2(m)& \nu_3(m) & \nu_4(m)& \nu_5(m) & \nu_6(m)& \nu_7(m)& \nu_8(m) & \v(m)\\
 \hline
 K & 1 &  0 & 0 & 0 & 1 & 0 & 2 & 4\\
  \hline
 K_u & -1 &  0 & 0 & 0 & -1 & 0 & 2 & 0\\
  \hline
\C[D] & 1& 0& 2& 0& 1& 0& 2 & 3\\
\hline
\C[Q] & -1& 0& 2& 0& -1& 0& 2 & 1\\
\hline
\end{array}
$$
In particular, the Frobenius-Schur exponents of $K, K_{u}, \C[D], \C[Q]$ are $8,8,4,4$ respectively. The quasi-Hopf algebra $K_{u}$ is the only one in the list for which $\v(m)=0$.
Therefore, $K_{u}$ is  a genuine quasi-Hopf algebra. It is well-known that $K,\C[D],\C[Q]$ are all the noncommutative semisimple Hopf algebras of dimension $8$ (cf. \cite{Masuoka}). Their corresponding indicators $\nu_n(m)$ can be computed using the formula \eqref{eq:FS1}.}
\end{example}

\section{Twisted quantum doubles}
The main result of this section is to show that existence of vanishing total indicators is
 also a necessary condition for an \emph{abelian} twisted quantum double, considered in \cite{MN1}, being genuine.

 The {\em twisted quantum double} $D^{\omega}(G)$ of $G$ relative to a normalized $3$-cocycle
$\w: G \times G \times G \to \BC^\times$ is the semisimple quasi-Hopf algebra with underlying vector
space $\C[G]\du\o \C[G]$  in which multiplication,
comultiplication $\Delta$, associator $\phi$, counit $\varepsilon$ and quasi-antipode
$(S, \a, \b)$ are given by
\begin{eqnarray*}
&&(e(g) \o x)(e(h) \o y) =\theta_g(x,y) \delta_{g^x,h}
e(g)\o x y,\\
&&\Delta(e(g)\o x)  = \sum_{hk=g} \gamma_x(h,k) e(h)\o x
\o e(k) \o x,\\
&&\phi = \sum_{g,h,k \in G} \omega(g,h,k)^{-1} e(g) \o 1 \o
e(h) \o 1 \o e(k) \o 1,\\
&&\varepsilon(e(g)\o x) = \delta_{g,1}, \quad \alpha=1, \quad \beta=\sum_{g \in
G} \omega(g,g^{-1},g)e(g)\o 1,\\
&&S(e(g)\o x) =
\theta_{g^{-1}}(x,x^{-1})^{-1}\gamma_x(g,g^{-1})^{-1}e(x^{-1}g^{-1}x)\o
x^{-1},
\end{eqnarray*}
where $\{e(g)|g\in G\}$ is the dual basis of $\{g|g\in G\}$,  $\delta_{g,1}$ is the Kronecker delta,  $g^x=x^{-1}g x$, and
\begin{eqnarray*}
\theta_g(x,y) &=&\frac{\omega(g,x,y)\omega(x,y,(x y)^{-1}g x y)}{\omega(x,x^{-1}g x,y)}, \\
\gamma_g(x,y) & = & \frac{\omega(x,y,g)\omega(g, g^{-1}x g, g^{-1}yg)}{\omega(x,g,
g^{-1}y g)}
\end{eqnarray*}
for any $x, y, g \in G$ (cf. \cite{DPR, Kassel}).

As in \cite{MN1}, we denote by $\Gamma^{\omega}$ the set of all group-like elements of $D^{\omega}(G)$ and call $\omega$ \emph{abelian} if $D^{\omega}(G)$ is a commutative algebra. This can only happen when $G$ is abelian and $\Gamma^\w$ spans $D^{\omega}(G)$. The following theorem characterizes those abelian twisted doubles of finite groups which are genuine  quasi-Hopf algebras, and it provides an answer for a question of Dijkgraaf,  Pasquier and Roche posed in \cite[p69]{DPR}.

\begin{theorem}\label{t4.1} Let $G$ be a finite abelian group, and $\omega$ a normalized $3$-cocycle of $G$ such that $D^\w(G)$ is a commutative algebra. Then $D^{\omega}(G)$ is  a genuine quasi-Hopf algebra if, and only if, there exists
$V\in \Rep(D^{\omega}(G))$ such that $\v(V)=0$.
\end{theorem}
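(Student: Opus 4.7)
The proof splits into two implications and the ``if'' direction is essentially free: whenever some $V\in\Rep(D^\omega(G))$ satisfies $\v(V)=0$, Corollary~\ref{c3.5} immediately forces $D^\omega(G)$ to be genuine, with no use of commutativity or abelianness.

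The substantive work is the ``only if'' direction, which I would prove by contrapositive: assuming $\v(V)>0$ for every $V\in\Rep(D^\omega(G))$, construct a gauge equivalence to an ordinary Hopf algebra. The key structural input is that commutativity of $D^\omega(G)$ makes $\CC:=\Rep(D^\omega(G))$ a pointed fusion category: every simple module is one-dimensional, and the isomorphism classes of simples form a finite abelian group $A$ under $\otimes$ with $|A|=|G|^2$. Hence there is a pivotal equivalence $\CC\simeq\Vec_A^{\psi}$ for some $\psi\in Z^3(A,\BC^\times)$. Because $A$ is abelian, $\Vec_A^{\psi}$ admits a fibre functor to $\Vec_\BC$ if and only if $[\psi]=0$ in $H^3(A,\BC^\times)$, in which case $\Vec_A^{\psi}\simeq\Rep(\BC[A]\du)$ and $D^\omega(G)$ is gauge equivalent to the group Hopf algebra $\BC[A]\du$. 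Thus the task reduces to deducing $[\psi]=0$ from the positivity hypothesis.

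To make this deduction I would apply Proposition~\ref{p1}(ii) to $\CC\simeq\Vec_A^{\psi}$ using the standard description of $\mathcal{Z}(\Vec_A^{\psi})\simeq\Rep(D^{\psi}(A))$: its simple objects are labelled by pairs $(a,\rho)$ with $a\in A$ and $\rho:A\to\BC^\times$ trivializing a $2$-cocycle $\beta_a\in Z^2(A,\BC^\times)$ explicitly built out of $\psi$ and $a$; the forgetful functor sends $(a,\rho)\mapsto a$, and the twist is $\theta_{(a,\rho)}=\rho(a)$. For a simple $V=a\in A$ of $\CC$, the formula in Proposition~\ref{p1}(ii) then becomes
$$
\v(a)=\frac{N}{|A|}\cdot\#\{\rho:\rho\text{ trivializes }\beta_a\text{ and }\rho(a)=1\}.
$$
The set of trivializations of $\beta_a$, when nonempty, is a torsor over the character group $\hat{A}$, so the image of the evaluation $\rho\mapsto\rho(a)$ is a coset of the subgroup $\mu_d\subset\BC^\times$ of $d$-th roots of unity, with $d=\ord(a)$. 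Positivity $\v(a)>0$ is thus equivalent to $[\beta_a]=0$ together with this coset containing $1$, i.e.\ the existence of a trivialization of $\beta_a$ normalized at $a$.

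The main obstacle is the concluding cohomological step: showing that the simultaneous vanishing of $[\beta_a]$ for all $a\in A$, together with a compatible family of trivializations $\rho_a$ satisfying $\rho_a(a)=1$, forces $[\psi]=0$. I expect this to be an Eilenberg--MacLane style argument in which the classes $[\beta_a]$ are essentially the slant products of $[\psi]$ and govern fibre functors on $\Vec_A^{\psi}$; alternatively one can try to patch the $\rho_a$ into a single $2$-cochain $\eta\in C^2(A,\BC^\times)$ with $d\eta=\psi$, the normalization $\rho_a(a)=1$ being precisely what is needed to make the patching coherent. Either way, once $[\psi]=0$ is established, the equivalence $\CC\simeq\Rep(\BC[A]\du)$ yields the required gauge equivalence and completes the contrapositive.
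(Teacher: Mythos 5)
Your top-level architecture agrees with the paper's: the ``if'' direction is indeed immediate from Corollary~\ref{c3.5}, and for the converse the paper likewise reduces a commutative $D^\w(G)$ to a pointed quasi-Hopf algebra $H(\Gamma^\w,\w')$ on the group $\Gamma^\w$ of group-like elements (of order $|G|^2$), so that everything hinges on showing that positivity of all total indicators forces $[\w']=0$. However, there are two genuine gaps. The smaller one: your formula $\v(a)=\frac{N}{|A|}\cdot\#\{\rho : \rho \text{ trivializes } \beta_a,\ \rho(a)=1\}$ silently discards the simple objects $(a,\rho)$ of $\mathcal{Z}(\Vec_A^{\psi})$ with $\dim\rho>1$; these occur exactly when $[\beta_a]\neq 0$, and such a $\rho$ still has a well-defined scalar $\rho(a)$ (Schur) which can equal $1$, in which case $(a,\rho)$ contributes $(\dim\rho)^2$ to the sum in Proposition~\ref{p1}(ii). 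So the implication ``$\v(a)>0\Rightarrow[\beta_a]=0$'' is not established, and it is precisely the direction you need.

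The larger gap is the one you yourself flag as ``the main obstacle'': the deduction of $[\psi]=0$ from the family of normalized trivializations is the entire content of the theorem, and you leave it as a hope that an Eilenberg--MacLane or patching argument will work --- but patching the $\rho_a$ into a single cochain trivializing $\psi$ is essentially equivalent to what is to be proved. The paper closes this gap using an ingredient absent from your sketch: the braiding on $\Rep(D^\w(G))$ makes $(\w',c)$ an Eilenberg--MacLane abelian $3$-cocycle, so by Theorem~\ref{EM} the class $[\w']$ is determined by the quadratic form $t(x)=c(x,x)$, which reduces the problem to the cyclic summands of $\Gamma^\w$ via \cite[Lem. 6.2]{MN1}; there, the fact that $\w'^2$ is a coboundary (Lemma~\ref{l4.1}(ii), again a consequence of the braiding), the explicit formula $\nu_{n}(V_{x})=\delta_{x^{n},1}\prod_{r}\w'(x,x^{r},x)$, and the explicit generator of $H^3(C,\BC^\times)$ show that a nontrivial restriction of $\w'$ produces a vanishing total indicator. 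Without invoking the braided structure, the total indicators of the invertible objects only see the restrictions of $\psi$ to cyclic subgroups, and it is not clear that these determine $[\psi]$ for a general abelian group; to complete your argument you would need either to bring in the braiding as the paper does or to supply the missing cohomological patching step in full.
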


To prove this theorem, we need some preparations. Recall that
 an Eilenberg-MacLane (EM) 3-cocycle of $G$ is a pair $(\omega,c)$ where $\omega\in Z^{3}(G,\C^\times)$ and $c$ a $2$-cochain of $G$ satisfying
the following conditions:
\begin{equation}\label{eq:EM}
\frac{c(xy,z)}{c(x,z)c(y,z)} \frac{\omega(x,z,y)}{ \omega(x,y,z)\omega(z,x,y)}=1=
\frac{c(x,yz)}{c(x,y)c(x,z)}\frac{\omega(x,y,z)\omega(y,z,x)}{\omega(y,x,z)}
\end{equation}
for all $x,y,z \in G$.
The EM $3$-cocycle $(\omega,c)$ is a coboundary if there exists a $2$-cochain $h$ of $G$ such that
\begin{gather*} \omega=\delta h\quad\text{and}\quad
c(x,y)=\frac{h(x,y)}{h(y,x)}\,.
\end{gather*}
The EM cohomology group  $\H^{3}_{ab}(G, \C^{\times})$ is then defined by
$$
\H^{3}_{ab}(G, \C^\times) = \frac{Z_{ab}^3(G, \C^\times)}{B_{ab}^3(G, \C^\times)}
$$
where $Z_{ab}^3(G, \C^\times)$ and $B^{3}_{ab}(G,\C^\times)$ are respectively
the abelian groups of EM 3-cocycles and 3-coboundaries.

For $(\omega,c)\in Z^{3}_{ab}(G, \C^\times)$, one can assign the function $t(x):=c(x,x)$, called its trace. The trace $t$ of an EM 3-cocycle of $G$ is a quadratic function on $G$, that means
\begin{enumerate}
  \item[(i)] $t(x^a)=t(x)^{a^2}$ for $a\in \BZ$, and
  \item[(ii)]$b_{t}(x,y):=\frac{t(xy)}{t(x)t(y)}$ defines a bicharacter of $G$.
\end{enumerate}
The following theorem of Eilenberg and Mac Lane  is essential to the discussion in this section (cf.  \cite[Thm. 3]{Mac52} and  \cite[\S7]{JS}).
\begin{theorem}[Eilenberg-MacLane]\label{EM}
 Let $Q(G,\C^{\times})$ be the abelian group of all quadratic functions from $G$ to $\C^\times$. The map assigning to each 3-cocycle its trace
 induces an isomorphism
 $$\H^{3}_{ab}(G, \C^{\times})\xrightarrow{\cong} Q(G,\C^{\times}). \qed
 $$
 \end{theorem}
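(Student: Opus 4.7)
The backward direction is immediate from Corollary \ref{c3.5}. For the forward direction I assume $\v(V)>0$ for every simple $V\in\Rep(D^\w(G))$ and aim to produce a fiber functor $\F:\Rep(D^\w(G))\to\Vect_\C$; by reconstruction this forces $D^\w(G)$ to be gauge equivalent to a (commutative semisimple) Hopf algebra.

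Since $D^\w(G)$ is commutative and semisimple, every simple module is one-dimensional, so $\CC:=\Rep(D^\w(G))$ is a pointed fusion category; it is moreover a modular tensor category because $\CC\simeq Z(\Vec_G^\w)$. Let $\Gamma$ denote the (abelian) group of isomorphism classes of simples of $\CC$ under tensor product (of order $|G|^2$), let $Q:\Gamma\to\C^\times$ be the scalar by which the ribbon twist acts on the corresponding simple (a quadratic form in the sense of Theorem \ref{EM}), and let $B(x,y):=Q(x+y)Q(x)^{-1}Q(y)^{-1}$ be the associated bicharacter; modularity of $\CC$ is equivalent to non-degeneracy of $B$. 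By a theorem of Davydov, a pointed modular category admits a fiber functor if and only if $(\Gamma,Q)$ admits a Lagrangian subgroup $L$, meaning $L=L^{\perp_B}$ and $Q|_L\equiv 1$. The task is therefore to manufacture such an $L$ from the positivity hypothesis.

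I next unpack the positivity condition via Proposition \ref{p1}(ii). Modularity of $\CC$ together with M\"uger's theorem gives $\ZC\simeq\CC\boxtimes\bar\CC$; its simples are $V_a\boxtimes V_b$ with $a,b\in\Gamma$, the forgetful functor sends $V_a\boxtimes V_b\mapsto V_{a+b}$, and its ribbon twist acts by $Q(a)Q(b)^{-1}$. Substituting into \eqref{eq:2.1}, the condition $\v(V_c)>0$ reduces to the existence of $a\in\Gamma$ with $Q(a)=Q(c-a)$, which after using $Q(c-a)=Q(c)Q(a)B(c,a)^{-1}$ collapses to the clean statement
\[
(\ast)\qquad \forall\,c\in\Gamma\ \exists\,a\in\Gamma:\ B(c,a)=Q(c),
\]
i.e.\ the character $B(c,-)\in\hat\Gamma$ takes the value $Q(c)$ somewhere, for every $c$.

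The heart of the argument is to deduce from $(\ast)$ the existence of a Lagrangian in $(\Gamma,Q)$. For this I plan to exploit the abelian-double structure: commutativity of $D^\w(G)$ lets one choose a 2-cochain $c_\w$ such that $(\w,c_\w)$ is an Eilenberg--MacLane 3-cocycle on $G$, and by Theorem \ref{EM} its class in $\H^3_{ab}(G,\C^\times)$ is classified by the quadratic trace $t(g)=c_\w(g,g)$. Both $\Gamma$ and $Q$ admit explicit descriptions in terms of $G$, $\hat G$ and $t$; in particular $\Gamma$ contains natural copies of $G$ (from the $\C[G]\du$-group-likes) and $\hat G$ (from $\C[G]$-characters), each maximal $B$-isotropic of Lagrangian size but typically failing $Q|\equiv 1$ through $t$. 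I expect $(\ast)$ to force a cohomological vanishing on $t$ over a suitable subgroup of $G$, producing a Lagrangian as a ``twisted diagonal'' inside $G\times\hat G\le\Gamma$. The main obstacle is precisely this last step: since arbitrary non-degenerate metric abelian groups need not admit Lagrangians, the argument must use the special $G$--$\hat G$ structure of $\Gamma$ and the EM classification of $Q$ by $t$ in an essential way, rather than any purely abstract argument about quadratic forms on finite abelian groups.
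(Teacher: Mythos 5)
Your proposal does not address the statement it is supposed to prove. The statement is the classical Eilenberg--MacLane theorem: the assignment $(\w,c)\mapsto t$ with $t(x)=c(x,x)$ induces an isomorphism $\H^{3}_{ab}(G,\C^{\times})\cong Q(G,\C^{\times})$. This is a purely cohomological statement about quadratic functions on a finite abelian group; it mentions no indicators, no fiber functors, and no quasi-Hopf algebras. The paper does not reprove it --- it is quoted from \cite{Mac52} and \cite{JS} and used as a black box (e.g.\ in Lemma \ref{l4.1}). What you have written is instead a proof sketch of Theorem \ref{t4.1}, the characterization of genuine abelian twisted quantum doubles by vanishing total indicators: the opening appeal to Corollary \ref{c3.5} for a ``backward direction'' and the hypothesis $\v(V)>0$ for all simples of $\Rep(D^\w(G))$ only make sense for that result. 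As a proof of Theorem \ref{EM} the proposal is therefore vacuous; there is nothing in it that constructs the trace homomorphism, verifies it kills coboundaries, or establishes injectivity and surjectivity on cohomology.

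Even read charitably as an attempt at Theorem \ref{t4.1}, the argument has a genuine gap at exactly the decisive point: after reducing positivity of total indicators to your condition $(\ast)$, you say you ``expect'' $(\ast)$ to force the existence of a Lagrangian subgroup of $(\Gamma,Q)$, but this implication is never established, and it is the entire content of the theorem (you yourself note that abstract nondegenerate quadratic forms need not admit Lagrangians). The paper's actual route is different and more elementary: it identifies $D^\w(G)$ with $H(\Gamma^\w,\w')$, uses Theorem \ref{EM} and Lemma \ref{l4.1} to reduce to showing that the restriction of $\w'$ to each cyclic subgroup $C$ is a coboundary, and then combines the explicit indicator formula \eqref{eq:nun} with the Moore--Seiberg generator of $\H^3(C,\C^\times)$ to show that a cohomologically nontrivial restriction forces $\v(V_x)=0$ for some $x$. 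No modularity, Lagrangian subgroups, or Davydov-type fiber-functor criteria are invoked.
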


 \begin{remark}
 {\rm Recall the example of quasi-Hopf algebra $H(G,\omega)$ defined in Example \ref{eg1} for a normalized 3-cocycle $\w$ on a finite group $G$. Let $\CC=\Rep(H(G,\omega))$. Then $\Irr(\CC)$ consists of a set of 1-dimensional representations $V_x$ indexed by $G$. Moreover, a pair $(\w, c)$ is an EM 3-cocycle of a finite abelian group $G$ if, and only if,  $\Rep(H(G,\omega))$ is a braided spherical fusion category with the braiding given by
 $$
 c_{V_x, V_y}:= \left(V_x \o V_y \xrightarrow{c(x,y)} V_y \o V_x \right)
 $$
 for $x, y \in G$.
 }
   \end{remark}

For simplicity, we say that a quadratic function $t:G \to \BC^\times$ of a finite abelian group $G$ can be obtained from a bicharacter $b$ of $G$ if $t(x) = b(x,x)$ for all $x \in G$. We denote by $K(G, \BC^\times)$ the set of all quadratic functions which can be obtained from some bicharacters of $G$.

 \begin{lemma} \label{l4.1}
 Let $G$ be a finite abelian group and $(\omega,c)$ an EM $3$-cocycle of $G$ with trace $t$. Then:
 \begin{enumerate}
   \item[(i)] $\w$ is a coboundary if, and only if, $t \in K(G, \BC^\times)$. \smallskip
   \item[(ii)] $\omega^{2}$ is a $3$-coboundary of $G$.
 \end{enumerate}
 \end{lemma}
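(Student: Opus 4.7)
The plan is to reduce both parts to statements about the trace via the Eilenberg--MacLane isomorphism of Theorem~\ref{EM}. The key observation is that on the trace side both assertions become elementary: (i) characterizes when a quadratic function comes from a bicharacter, and (ii) amounts to checking that the square of any quadratic function lies in $K(G,\BC^\times)$.

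For part (i), forward direction: suppose $\w=\d h$ for some 2-cochain $h$. The EM 3-coboundary associated to $h$ is $(\d h, c_h)$ with $c_h(x,y)=h(x,y)/h(y,x)$, so $(\w,c)$ is cohomologous to $(1, b)$ where $b=c/c_h$. Setting $\w=1$ in the EM conditions \eqref{eq:EM} forces $b$ to be a bicharacter. Since $c_h(x,x)=1$, the trace of $(1,b)$ is $b(x,x)$; because trace is an invariant of the EM cohomology class (by Theorem~\ref{EM}), we conclude $t(x)=b(x,x)\in K(G,\BC^\times)$. Conversely, if $t(x)=b(x,x)$ for some bicharacter $b$, then $(1,b)$ is automatically an EM 3-cocycle (the EM conditions become trivial in the first coordinate and bilinearity in the second) with trace $t$, so by Theorem~\ref{EM} the class of $(1,b)$ equals that of $(\w,c)$. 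Comparing first coordinates, $\w$ differs from the trivial 3-cocycle by an ordinary 3-coboundary.

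For part (ii), apply (i) to $(\w^2,c^2)\in Z_{ab}^{3}(G,\BC^\times)$, whose trace is $t^2$. The quadratic property of $t$ gives $t(x^2)=t(x)^4$, so
$$
b_t(x,x)=\frac{t(x^2)}{t(x)^2}=\frac{t(x)^4}{t(x)^2}=t(x)^2,
$$
i.e.\ $t^2(x)=b_t(x,x)$, which lies in $K(G,\BC^\times)$ since $b_t$ is a bicharacter. Part (i) applied to $(\w^2,c^2)$ then yields that $\w^2$ is a 3-coboundary.

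There is no significant obstacle in this proof: everything rests on the EM isomorphism already stated as Theorem~\ref{EM}, together with the quadratic identities built into the definition of $b_t$. The only routine checks are that the trace descends to $\H^3_{ab}(G,\BC^\times)$ (immediate from $c_h(x,x)=1$) and that $(1,b)$ satisfies \eqref{eq:EM} when $b$ is a bicharacter; both are essentially tautological once the defining identities are written out.
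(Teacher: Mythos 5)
Your proof is correct and follows essentially the same route as the paper: both parts are reduced, via Theorem~\ref{EM}, to deciding whether the relevant trace lies in $K(G,\BC^\times)$, and part (ii) is deduced by applying part (i) to the EM $3$-cocycle $(\w^2,c^2)$. The only real difference is the witness you exhibit for $t^2\in K(G,\BC^\times)$: the paper observes directly from \eqref{eq:EM} that $b(x,y):=c(x,y)c(y,x)$ is a bicharacter with $b(x,x)=t(x)^2$, whereas you use the polarization $b_t(x,y)=t(xy)/(t(x)t(y))$ of the trace together with the quadratic identity $t(x^2)=t(x)^4$. The two bicharacters have the same diagonal, so both arguments work; yours has the minor advantage of relying only on the already-stated fact that $t$ is a quadratic function rather than on a fresh manipulation of the cocycle conditions, at the cost of invoking that fact as a black box.
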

 \begin{proof}  Let us denote the cohomology class represented by the EM $3$-cocycle $(\omega,c)$ of $G$ as $[(\omega,c)]$. Then $\w$ is a coboundary if, and only if, $[(\w,c)] = [(1,b)]$ for some 2-cochain $b$ of $G$. By \eqref{eq:EM}, $b$ is a bicharacter of $G$ and $t(x)=c(x,x)=b(x,x)$ for all $x \in G$. Therefore, $t \in K(G, \BC^\times)$. Conversely, suppose there exists a bicharacter $b$ on $G$ such that $b(x,x) =t(x)$ for all $x \in G$. Since $(1,b)$ is an EM 3-cocycle which has the same trace as $(\w,c)$, $[(\w, c)]=[(1,b)]$ by Theorem \ref{EM}. In particular, $\w$ is a coboundary. This proves statement (i).\smallskip\\
 (ii) By  \eqref{eq:EM}, $b(x,y):=c(x,y)c(y,x)$ defines a bicharacter of $G$, and the trace of the EM 3-cocycle $(\w^2,c^2)$ can be obtained from $b$. It follows from (i) that $\w^2$ is a coboundary.
 \end{proof}

 \begin{lemma} \label{l4.2}
 Let $G$ be a finite abelian group and $(\w, c)$ an EM 3-cocycle of $G$. Then $\w$ is a coboundary if, and only if, $\v(V)>0$ for all simple $H(G,\w)$-module $V$. In this case,  $H(G,\w)$ is gauge equivalent to the Hopf algebra $\BC[G]^*$.
 \end{lemma}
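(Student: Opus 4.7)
The plan splits the biconditional into two directions. For the forward direction, suppose $\w=\delta h$ is a coboundary. Then Example \ref{eg1} shows that the gauge transformation $F=\sum_{a,b\in G} h(a,b)\,e(a)\otimes e(b)$ twists $H(G,\w)$ to the ordinary bialgebra $\BC[G]^*$, which establishes the second assertion. Since $\BC[G]^*$ is a semisimple Hopf algebra, Theorem B gives $\v(W)>0$ for every non-zero $\BC[G]^*$-module $W$. Because gauge equivalence of semisimple quasi-Hopf algebras induces a pivotal equivalence of the canonical spherical structures on their representation categories, Proposition \ref{p1}(i) yields $\v(V)>0$ for every simple $H(G,\w)$-module $V$.

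For the converse, set $\CC=\Rep(H(G,\w))$, whose simples are the 1-dimensional objects $\{V_g:g\in G\}$ indexed by $G$. The plan is to apply Proposition \ref{p1}(ii) to each $V_g$, which requires identifying the simples of $\ZC$ lifting $V_g$ along the forgetful functor and computing their twists. Specializing the second identity in \eqref{eq:EM} to $G$ abelian gives
\begin{equation*}
c(g,h)\,c(g,k)=c(g,hk)\,\gamma_g(h,k),
\end{equation*}
so $c(g,-)$ is a $\gamma_g$-projective character of $G$; in particular $\gamma_g$ is a coboundary for every $g\in G$, which forces all simples of $\ZC$ to be one-dimensional. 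The simples lifting $V_g$ then form a torsor over $\hat G$ and are precisely $X_{g,\eta}:=(V_g,\,c(g,-)\eta)$ for $\eta\in\hat G$. Using the standard formula for the twist of a simple object of $\ZC$ in this setting, we have $\theta_{X_{g,\eta}}=t(g)\eta(g)$, where $t(g)=c(g,g)$ is the trace of $(\w,c)$. Substituting into Proposition \ref{p1}(ii) and using that all pivotal dimensions are $1$ yields
\begin{equation*}
\v(V_g)=\frac{N}{|G|}\,\#\{\eta\in\hat G:\eta(g)=t(g)^{-1}\}.
\end{equation*}
Since the evaluation $\hat G\to\BC^\times$, $\eta\mapsto\eta(g)$, surjects onto the cyclic group of $\ord(g)$-th roots of unity, $\v(V_g)>0$ if and only if $t(g)^{\ord(g)}=1$.

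Granting $t(g)^{\ord(g)}=1$ for every $g\in G$, I would construct a bicharacter $b:G\times G\to\BC^\times$ satisfying $b(x,x)=t(x)$, conclude $t\in K(G,\BC^\times)$, and finally invoke Lemma \ref{l4.1}(i) to deduce that $\w$ is a coboundary. Writing $G=\prod_{i=1}^r \BZ/n_i\BZ$ with generators $e_i$, set $b(e_i,e_i)=t(e_i)$, $b(e_i,e_j)=b_t(e_i,e_j)$ for $i<j$, and $b(e_i,e_j)=1$ for $i>j$, where $b_t(x,y)=t(xy)/(t(x)t(y))$ is the symmetric bicharacter canonically attached to $t$. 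The hypothesis $t(e_i)^{n_i}=1$ together with the bicharacter property of $b_t$ ensures $b$ extends bilinearly to $G\times G$, and a direct computation gives $b(x,x)=t(x)$ for all $x\in G$. The main obstacle will be the parametrization of the simples of $\ZC$ and the twist formula under the EM hypothesis, which rely essentially on $(\w,c)$ being EM to trivialize $\gamma_g$ cohomologically for every $g$; once that is in hand, the concluding construction of $b$ is routine.
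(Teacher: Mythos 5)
Your proof is correct in outline but the converse direction takes a genuinely different route from the paper's. The paper reduces to cyclic subgroups $C\le G$ via Lemma \ref{l4.1}(i) and \cite[Lem. 6.2(i)]{MN1}, normalizes $\w^2=1$ using Lemma \ref{l4.1}(ii), and then works entirely with the explicit indicator formula $\nu_n(V_x)=\delta_{x^n,1}\prod_{r=1}^{n-1}\w(x,x^r,x)$ of \cite[Prop. 7.1]{NS0}: positivity forces $\nu_{\ord(x)}(V_x)=1$, and comparison with the generating $3$-cocycle of $H^3(C,\BC^\times)$ shows each $\w_C$ is a coboundary. You instead compute the center of $\Rep(H(G,\w))$ directly: the EM condition makes $c$ a braiding, so every $V_g$ carries the half-braiding $c(g,-)$, the simples of $\ZC$ lying over $V_g$ form a $\hat{G}$-torsor, and Proposition \ref{p1}(ii) yields the closed formula $\v(V_g)=\frac{N}{|G|}\#\{\eta\in\hat{G}:\eta(g)=t(g)^{-1}\}$, so positivity is equivalent to $t(g)^{\ord(g)}=1$ for all $g$; your explicit construction of a bicharacter $b$ with $b(x,x)=t(x)$ from this condition is correct (and is in effect a self-contained proof of the cited \cite[Lem. 6.2(i)]{MN1}), after which Lemma \ref{l4.1}(i) finishes. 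Your route buys an exact formula for $\v(V_g)$ and makes the role of the trace $t$ transparent, at the cost of having to establish the parametrization of $\Irr(\ZC)$ and the twist formula $\theta_{(V_g,\beta)}=\beta(g)$ for the center of a pointed category --- standard but nontrivial facts that the paper avoids by staying with the abstract Theorem \ref{t1} and Proposition \ref{p1}. Two details to pin down in a full write-up: the first identity in \eqref{eq:EM} is also needed (it trivializes $\theta_g$, which governs the module structure over $D^\w(G)$, just as the second trivializes $\gamma_g$), and the twist convention may give $\beta(g)^{-1}$ rather than $\beta(g)$, though the condition $\theta_X=\id_X$, hence your formula for $\v(V_g)$, is unaffected. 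The forward direction coincides with the paper's.
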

 \begin{proof}
   In view of Example \ref{eg1}, if $\w$ is a coboundary, then $H(G, \w)$ can be twisted to the bialgebra $\BC[G]\du$ by a gauge transformation. It follows from Corollary \ref{c3.5} that $\v(V) >0$ for all simple $H(G, \w)$-modules $V$.
   Conversely, we assume the positivity of total indicators. Let $t$ be the trace of the EM 3-cocycle $(\w,c)$. By Lemma \ref{l4.1}(i), it is equivalent to show that $t \in K(G, \BC^\times)$. Since $G$ is a direct sum of its cyclic subgroups, by \cite[Lem. 6.2(i)]{MN1}, $t \in K(G, \BC^\times)$ if, and only if, $t_C \in K(C, \BC^\times)$  for each cyclic summand $C$ of $G$, where $t_C$ denotes the restriction of $t$ on $C$. Note that the restriction $(\w_C, c_C)$ of  $(\w,c)$ on $C$ is an EM 3-cocycle of $C$ and its trace is equal to $t_C$. Therefore, by Lemma \ref{l4.1}(i), it is enough to prove that $\w_C$ is a coboundary for each cyclic subgroup $C$ of $G$.

   By Lemma \ref{l4.1}(ii), $\w^2$ is a coboundary. We may simply assume $\w^2=1$ as $\v(V)$ are preserved by gauge equivalence of semisimple quasi-Hopf algebras. A complete set  of non-isomorphic simple $H(G,\w)$-modules is given by $\Irr(H(G, \w))=\{V_{x}|x\in G\}$. By \cite[Prop. 7.1]{NS0},
 \begin{equation} \label{eq:nun}
 \nu_{n}(V_{x})=\delta_{x^{n},1}\prod_{r=1}^{n-1}\omega(x,x^{r},x).
 \end{equation}
 The equation implies  that $\nu_n(V_x) = 0$ if $\ell \nmid n$ where $\ell = \ord(x)$. Since $\w^2=1$, $\w(x,x^{r},x)=\pm 1$ for all $r \in \BZ$. By \eqref{eq:nun}, $\nu_\ell(V_x) = \pm 1$ and
 $$
 \nu_{k\ell}(V_x) = \nu_\ell(V_x)^k\,.
 $$
 for all positive integer $k$. We claim that $\nu_\ell(V_x) = 1$.  Suppose not. Then  $\nu_\ell(V_x) = -1$.
Let $N=\FSexp(H(G,\w))$. Then $\nu_N(V_x) = \dim V_x=1$. Therefore, $N/\ell$ is even and so
$\v(V_x) = 0$ which contradicts the assumption of the positivity of total indicators.

We now have
$$
1 = \nu_\ell(V_x) = \prod_{r=1}^{\ell-1} \w(x,x^r,x)\,.
$$
Note that right hand side is also the $\ell$-th indicator of $V_x$ considered as an $H(C, \w_C)$-module where $C$ is the cyclic subgroup generated by $x$. Since $\nu_\ell(V_x)$ is invariant under gauge transformations, the product depends only on the cohomology class of $\w_C$ in $H^3(C, \BC^\times)$, which is a cyclic group of order $\ell$. A generating 3-cocycle $\phi$, as described in \cite{MS}, is defined by
 $$
 \phi(x^{i},x^{j},x^{k})=q^{\ol i(\ol j + \ol k - \ol{j+k})/\ell}
 $$
 where $\ol i$ denotes the least non-negative residue of $i$ modulo $\ell$, and $q$ a primitive $\ell$-th root of unity.  Thus, $\w_C$ is cohomologous to $\phi^a$ for some non-negative integer $0 \le a \le \ell-1$, and
 $$
 1 = \prod_{r=1}^{\ell-1} q^{a (r + 1 - \ol{r+1})/\ell} = q^a\,.
 $$
 This implies $a=0$ and so $\w_C$ is a coboundary. This completes the proof of the lemma.
 \end{proof}

Now we can proof Theorem \ref{t4.1}
 \begin{proof}[Proof Theorem \ref{t4.1}]
    By \cite[Cor. 3.6]{MN1}, $D^{\omega}(G)$ is spanned by the set $\Gamma^\w$ of group-like elements when $D^\w(G)$ is commutative.  In particular,
 $\Gamma^{\omega}$ is a finite abelian group which fits into an exact sequence of abelian groups
 $$
 1 \to \hat{G} \to \Gamma^\w \to G \to 1
 $$
 determined by $G$ and $\w$. Moreover,  $D^{\omega}(G)$ is isomorphic to $H(\Gamma^\w,\omega')$ as quasi-bialgebras, where $\w' \in Z^3(\Gamma^\w, \BC^\times)$ is the inflation of $\w\inv$ along the above map $\Gamma^\w \to G$ \cite[Sect. 9]{MN1}. The braiding of $\Rep(D^\w(G))$ determines an EM 3-cocycle $(\w', c)$ of $\Gamma^\w$. By Lemma \ref{l4.2}, the existence of a vanishing total indicator if, and only if, $\w'$ is a non-trivial  3-cocycle of $\Gamma^\w$. This is equivalent to that $H(\Gamma^\w,\omega')$, or equivalently $D^\w(G)$, is a genuine quasi-Hopf algebra.
 \end{proof}

\section{Tambara-Yamagami categories}
In this section, we study the positivity of total indicators for \emph{integral} Tambara-Yamagami categories associated with elementary $p$-groups \cite{TY}. By \cite{ENO}, these fusion categories are monoidally equivalent to the categories of representations of semisimple quasi-Hopf algebras. In \cite{Ta}, a necessary and sufficient condition is obtained for an  integral Tambara-Yamagami category admitting a fibration, i.e. it is monoidally equivalent to the category of representations of a semisimple Hopf algebra. By Corollary \ref{c3.5}, positivity of total indicators is a  necessary condition for the existence of a fibration, but not sufficient in general. We will demonstrate the sufficiency of positivity of total indicators for these integral Tambara-Yamagami categories in this section.

Let $A$ be a finite abelian group. Tambara and Yamagami \cite{TY} classified fusion categories with a complete set of simple objects $A\sqcup \{m\}$ satisfying the fusion rules
\begin{equation}a\o b\cong ab,\;\;a\o m\cong m\cong m\o a,\;\;m\o m\cong \bigoplus_{x\in A}x \quad(a,b\in A).\end{equation}
They also showed that such categories are parametrized by pairs $(\chi,\tau)$ where $\chi$ is a non-degenerate symmetric bicharacter
of $A$, and  $\tau$ is a square root of $|A|^{-1}$. The corresponding fusion category, denoted by $\mathcal{TY}(A,\chi,\tau)$, can be described more precisely in the following definition.
\begin{definition} {\rm $\mathcal{TY}(A,\chi,\tau)$ is a skeletal fusion category over $\BC$ with the set of simple objects $S:=A\sqcup \{m\}$. $\Hom$-sets between
elements of $S$ are given by $$\Hom(s,s')=\left \{
\begin{array}{ll} \C & \;\;\;\;\textrm{ if }s=s',\\
0 &
\;\;\;\;\textrm{otherwise}
\end{array}\right. $$
with $\id_s = 1 \in \BC$.
The compositions of morphisms are obvious one. Tensor products of elements of $S$ are given by (5.1) (but with $\cong$ replaced by $=$). The unit object $\1$ is strict and equal to the identity $e\in A$. The
associativity constraint $\Phi$ is determined by
\begin{eqnarray*}
&&\Phi_{a,m,b}=\chi(a,b)\id_{m}:\;m\to m,\\
&&\Phi_{m,a,m}=(\chi(a,x)\delta_{x,y}\id_{x})_{x,y}:\;\bigoplus_{x\in A}x\to \bigoplus_{y\in A}y,\\
&&\Phi_{m,m,m}=(\tau\chi(x,y)^{-1}\id_{m})_{x,y}:\;\bigoplus_{x\in A}m\to \bigoplus_{y\in A}m,
\end{eqnarray*}
where $a,b\in A$, and the other $\Phi_{s,t,u}\;(s,t,u\in S)$ are identity morphisms. Duality of $\mathcal{TY}(A,\chi,\tau)$ is described as follows: For $a\in A$, $a^{\ast}:=a^{-1}$ with $\ev_a,\db_a$ being identity $\id_e$. For the object $m$, $m^{\ast}:=m$ with morphisms $\ev_m=\tau^{-1}\pi: m\otimes m\to \1$ and $\db_m=\iota: \1\to m\otimes m$ where $\pi: m\otimes m\to \1$ and $\iota: \1\to m\otimes m$ are respectively the canonical projection and embedding satisfying $\pi \iota = \id_\1$
 }
\end{definition}

Since the Frobenius-Perron dimension of $\TY(A,\chi, \tau)$ is an integer, $\TY(A, \chi, \tau)$  is pseudo-unitary by \cite{ENO}. In particular, there exists a unique spherical pivotal structure $j$ for which the pivotal dimension $d(V)$ of an object $V$ is equal to its Frobenius-Perron dimension, i.e.
$$
d(a) = 1 \quad \text{for}\quad a \in A, \quad \text{and}\quad d(m) = \sqrt{|A|}\,.
$$
 On can verify directly that $j_a = \id_a$ for $a \in A$, and $j_{m}=\sgn(\tau)\id_{m}$, where $\sgn(\tau)$ means the sign of the real number $\tau$. We will always assume $\TY(A,\chi, \tau)$ is a spherical fusion category relative to this canonical pivotal structure $j$.

By \cite{TY}, two Tambara-Yamagami categories $\TY(A,\chi,\tau)$ and $\TY(A',\chi',\tau')$ are monoidally equivalent if, and only if,  $\tau=\tau'$ and
there exists a group isomorphism $\sigma:A\to A'$ satisfying $\chi(a,b)=\chi'(\sigma(a),\sigma(b))$ for all $a,b\in A$ (i.e. $(A,\chi)$ and $(A',\chi')$ are  \emph{isometric}).

We  now turn to the case of elementary $p$-groups $V$ of order $p^r$, where $p$ is a prime. Then $V$ is  an $r$-dimensional vector space over the finite field $\f_p$ of order $p$.  A non-degenerate symmetric bilinear form $B: V \times V \to \f_p$ on $V$ determines a non-degenerate symmetric bicharacter $\chi_B$ defined by $\chi_B(a,b) = \Eexp(2\pi i B(a,b))$; moreover, the  assignment $B \to \chi_B$ is a one-to-one correspondence. Two bilinear forms $B$ and $B'$ respectively on the $\f_p$-spaces $V$ and $V'$ are said to be \emph{isometric} if there exists an isomorphism $\sigma : V \to V'$ of $\f_p$-spaces such that $B'(\sigma(a), \sigma(b))=B(a,b)$ for all $a, b \in V_1$. It is easy to see that
two bilinear forms $B, B'$ are isometric if, and only if, $\chi_{B}$, $\chi_{B'}$ are isometric bicharacters. Moreover, any bilinear form $B$ on a $\f_p$-space is uniquely determined by its \emph{Gram matrix} $[B(v_i, v_j)]_{ij}$ relative to a basis $\{v_i\}_i$ of $V$. In particular, we will denote by $B_0$ the bilinear form on $\f_p^r$ whose Gram matrix relative to the standard basis is the identity

\begin{remark}\label{r:5.1}
{\rm
  The Tambara-Yamagami category $\TY(A, \chi, \tau)$ is integral if, and only if, $d(m)=\sqrt{|A|}$ is an integer, or equivalently $|A|$ is a square. In this case, by \cite{ENO}, the fusion category is monoidally equivalent the representation category of a semisimple quasi-Hopf algebra over $\BC$. If $V$ is an elementary $p$-group of order $p^r$, then
  $\TY(V, \chi, \tau)$ is integral if, and only if, $r$ is even.
  }
\end{remark}

\subsection{Characteristic two.} We will show in this subsection that a Tambara-Yamagami category associated with an elementary 2-group of square order admits a fibration if, and only if, all its total indicators are positive.

Recall that all the non-degenerate alternating bilinear forms on $V=\f_2^r$ with $r$ even are isometric. Any non-degenerate symmetric bilinear form on $V$, which is not alternating, is isometric to  $B_0$. In particular, there are exactly two isometric classes of non-degenerate symmetric bilinear on $\f_2^r$ when $r$ is even. For odd $r$, every non-degenerate symmetric bilinear form is isometric to $B_0$ (see, for example, \cite{AA}).

Using this classification of symmetric bilinear forms, the indicators of the object $m$ in these Tambara-Yamagami categories have been obtained by Shimizu \cite[Thm. 6.3]{Sh}:  For any non-degenerate symmetric bilinear form $B$ on $\f_2^r$, the $n$-th indicator $\nu_n(m)$ of the simple object $m$ in $\TY(\f_2^r, \chi_B, \tau)$ is zero if $n$ odd. Moreover,
\begin{enumerate}
  \item[(i)] if $B$ is not alternating, then
  \begin{equation}\label{eq:5.4}
     \nu_{2k}(m)=\sgn(\tau)^k
\left(\frac{1+i}{\sqrt{2}}\right)^{rk}\left(\frac{1+i^{-k}}{\sqrt{2}}\right)^r\,.
  \end{equation}

  \item[(ii)] If $r$ is even and $B$ is alternating, then
  \begin{equation}\label{eq:5.5}
  \nu_{2k}(m)=\left \{
\begin{array}{ll} \sgn(\tau) & \text{if $k$ is odd,}\smallskip \\
  2^{r/2} & \text{if $k$ is even.}
\end{array}\right.
 \end{equation}
\end{enumerate}
We can now compute the total indicators for the integral Tambara-Yamagami categories associated with an elementary 2-group of square order.
\begin{proposition} \label{p:5.6}
Let $V=\f_{2}^{2\ell}$, $B$ a non-degenerate symmetric bilinear form on  $V$, $\tau$ a square root of $|V|\inv$, and $\CC=\TY(V,\chi_B,\tau)$.
\begin{enumerate}
  \item[(i)] If $B$ is not alternating, then
$\FSexp(\CC)=8$ and $\v(m)=2 \sgn(\tau)+2^{\ell}$. \smallskip
\item[(ii)] If $B$ is alternating, then
$\FSexp(\CC)=4$ and $\v(m)=\sgn(\tau)+2^{\ell}$.
\end{enumerate}
In particular, $\v(m)=0$ if, and only if, $B$ is not alternating, $\ell=1$ and $\sgn(\tau)=-1$.
\end{proposition}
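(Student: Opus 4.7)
The plan is to compute $\v(m)$ directly from Shimizu's formulas \eqref{eq:5.4} and \eqref{eq:5.5}, after first determining the exponent $N=\FSexp(\CC)$ in each case. I would begin by disposing of the $1$-dimensional simple objects: every $a\in V$ satisfies $a^2=e$, so $a$ is self-dual, and since the associator of $\TY(V,\chi_B,\tau)$ is the identity on triples in $V$, a direct computation of $E^{(n)}_{V_a}$ from its skeletal description yields $\nu_n(V_a)=1$ when $n$ is even and $0$ otherwise (for $a\neq e$), while $\nu_n(\1_\CC)=1$ for all $n$. Consequently, $N$ is the smallest \emph{even} positive integer with $\nu_N(m)=d(m)=2^\ell$, and the problem reduces to analyzing $\nu_{2k}(m)$.

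For part (i), using $(1+i)/\sqrt 2=e^{i\pi/4}$ one rewrites \eqref{eq:5.4} with $r=2\ell$ as
\[
\nu_{2k}(m)=\sgn(\tau)^k\,i^{\ell k}\,\Bigl(\tfrac{1+i^{-k}}{\sqrt 2}\Bigr)^{2\ell}.
\]
A case split on $k\bmod 4$ evaluates the last factor to $2^\ell$, $(-i)^\ell$, $0$, $i^\ell$ for $k\equiv 0,1,2,3\pmod 4$ respectively. Combined with $\sgn(\tau)^k$ and $i^{\ell k}$, the $\ell$-dependence cancels in the $k\equiv\pm1$ branches and one obtains
\[
\nu_{2k}(m)=\begin{cases}2^\ell,&k\equiv 0\pmod 4,\\ \sgn(\tau),&k\equiv \pm 1\pmod 4,\\ 0,&k\equiv 2\pmod 4.\end{cases}
\]
Since $2^\ell\neq\sgn(\tau)$, the minimal $k$ producing the value $2^\ell=d(m)$ is $k=4$; hence $N=8$, and summing over one period gives $\v(m)=2\sgn(\tau)+2^\ell$. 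For part (ii), \eqref{eq:5.5} already shows $\nu_4(m)=2^\ell$, so $N=4$ and $\v(m)=\nu_2(m)+\nu_4(m)=\sgn(\tau)+2^\ell$.

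The ``in particular'' clause is then mechanical: the equation $2\sgn(\tau)+2^\ell=0$ from (i) forces $\sgn(\tau)=-1$ together with $2^\ell=2$, i.e.\ $\ell=1$; while $\sgn(\tau)+2^\ell=0$ in (ii) would require $2^\ell=1$, impossible for $\ell\geq 1$. The only delicate step is the $\bmod 4$ case analysis in (i), where one must track how the powers of $i$ contributed by the two factors $i^{\ell k}$ and $((1+i^{-k})/\sqrt 2)^{2\ell}$ conspire to eliminate all $\ell$-dependence outside the $k\equiv 0$ branch; as a sanity check, setting $\ell=1$ reproduces the numerical values for $K$, $K_u$, $\BC[D]$, $\BC[Q]$ in Example \ref{eg0}.
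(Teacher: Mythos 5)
Your proposal is correct and follows essentially the same route as the paper: both determine $\FSexp(\CC)$ and $\v(m)$ by evaluating Shimizu's formulas \eqref{eq:5.4} and \eqref{eq:5.5} for the relevant values of $k$ (the paper simply lists $\nu_2(m),\nu_4(m),\nu_6(m),\nu_8(m)$ rather than writing out the full $k \bmod 4$ case analysis, but the computation is identical), after noting that $\nu_n(a)=d(a)$ for $a\in V$ exactly when $n$ is even. Your mod-$4$ evaluation of \eqref{eq:5.4} and the cancellation of the $\ell$-dependence in the $k\equiv\pm1$ branches check out, as does the concluding ``in particular'' argument.
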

\begin{proof}
  Note that for all $a \in V$, $\nu_n(a)=1=d(a)$ if $n$ is even, and 0 otherwise. \smallskip \\
  (i) If $B$ is not alternating, then, by \eqref{eq:5.4}, we have
$$\nu_{2}(m)=\sgn(\tau),\quad \nu_{4}(m)=0, \quad \nu_{6}(m)=\sgn(\tau),\;\;
\nu_{8}(m)=2^{\ell}=d(m).$$
Therefore, $\FSexp(\CC)=8$ and $\bar{\nu}(m)=\sum_{k=1}^{4}\nu_{2k}(m)=2\sgn(\tau)+2^{\ell}$. \smallskip \\
(i) If $B$ is alternating, then, by \eqref{eq:5.5}, we have
$$\nu_{2}(m)=\sgn(\tau) \quad\text{and}\quad
\nu_{4}(m)=2^{\ell}=d(m).$$
Therefore, $\FSexp(\CC)=4$ and $\v(m)=\sum_{k=1}^{2}\nu_{2k}(m)=\sgn(\tau)+2^{\ell}$.

The last statement follows directly from statement (i) and (ii).
\end{proof}

\begin{corollary}
Let $\CC$ be a Tambara-Yamagami category associated with an elementary 2-group $V$ of order $2^{2\ell}$. Then $\CC$  is monoidally equivalent to $\Rep(H)$ for some  semisimple Hopf algebra $H$ if, and only if, all its total indicators are positive.
\end{corollary}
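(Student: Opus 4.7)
The corollary has two directions. The forward direction is immediate from Theorem B and invariance of total indicators, while the converse requires exhibiting explicit fiber functors in several subcases, which I would do by invoking Tambara's classification \cite{Ta}.

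\emph{Forward direction.} Assume $\CC \simeq \Rep(H)$ monoidally for some semisimple Hopf algebra $H$. Both categories are integral, hence pseudo-unitary, spherical fusion categories whose canonical pivotal structure is uniquely determined by $d = \Fdim$ \cite{ENO}. Consequently the monoidal equivalence is automatically pivotal, so Proposition \ref{p1}(i) identifies the total indicators of $\CC$ with those of $\Rep(H)$, which are positive by Theorem B.

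\emph{Converse.} For each invertible simple $a \in V$ one has $a^{\otimes 2} \cong \1$ with trivial associator on the fusion subcategory $\Vec_V \subset \CC$, so $\nu_n(a) = 1$ when $n$ is even and $0$ otherwise; since $\FSexp(\CC) \in \{4, 8\}$ is even, $\v(a) = \FSexp(\CC)/2 > 0$ holds automatically. Hence positivity of all total indicators reduces to $\v(m) > 0$, which by Proposition \ref{p:5.6} fails exactly in the single case: $B$ non-alternating, $\ell = 1$, $\sgn(\tau) = -1$. My plan is therefore to produce a fiber functor on $\CC$ in each of the three complementary cases:
(a) $B$ alternating, either sign of $\tau$;
(b) $B$ non-alternating with $\ell \ge 2$, either sign of $\tau$;
(c) $B$ non-alternating with $\ell = 1$ and $\sgn(\tau) = +1$.

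The construction appeals to Tambara's classification \cite{Ta}, which parametrizes fiber functors on $\TY(A, \chi, \tau)$ by pairs $(L, \rho)$ consisting of a Lagrangian subgroup $L \le A$ (meaning $|L|^2 = |A|$ and $\chi|_{L\times L} = 1$) together with a character $\rho$ of $L$ satisfying $\rho(x)^2 = \chi(x,x)$ and a Gauss-type compatibility condition with $\tau$. In case (a), the symplectic form $B$ always admits a Lagrangian $L$ of dimension $\ell$ on which $\chi_B$ is trivial, and for $\ell = 1$ Example \ref{eg0} already exhibits $\BC[D_8]$ and $\BC[Q_8]$ realizing both signs of $\tau$; the general $\ell$ reduces to this via the standard symplectic Lagrangian. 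In case (b), the explicit subspace $L = \langle e_{2i-1} + e_{2i} : 1 \le i \le \ell \rangle$ is a Lagrangian for $B_0$ of dimension $\ell$, and since $|L| = 2^\ell \ge 4$ the $2^\ell$ characters of $L$ yield distinct Gauss sums of both signs, so either sign of $\tau$ is realizable. Case (c) is the Kac algebra $K$ of Example \ref{eg0}.

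The main obstacle is the sign verification in case (b): I must select $\rho \in \hat L$ so that $\tau \sum_{x \in L} \rho(x)$ matches the precise normalization appearing in Tambara's criterion, which reduces to a finite but delicate character computation on $L$. The crucial point is that this flexibility exists exactly when $|L| \ge 4$, and disappears for $|L| = 2$ -- which is the algebraic reason the excluded case $\ell = 1$, $\sgn(\tau) = -1$ admits no fiber functor, matching the dichotomy furnished by Proposition \ref{p:5.6}.
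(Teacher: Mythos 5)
Your forward direction and your reduction of positivity to the single condition $\v(m)>0$ (via Proposition \ref{p:5.6}) are correct and consistent with the paper. The gap is in the converse. The paper disposes of it in one line by citing Tambara's Proposition 5.5, which already states that every integral Tambara--Yamagami category over an elementary $2$-group of square order admits a fibration except $\TY(\f_2^2,\chi_{B_0},-\tfrac{1}{2})$; comparing with Proposition \ref{p:5.6} finishes the proof. You instead set out to rederive this by exhibiting fiber functors from Tambara's parametrization, and that derivation is not actually carried out: you yourself flag the ``sign verification in case (b)'' as an unperformed ``delicate character computation,'' so the proof is incomplete precisely at the point where the dichotomy (why $\ell=1$, $\sgn(\tau)=-1$, $B$ non-alternating fails but everything else succeeds) must be established.

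Worse, the heuristic you offer for that step is wrong. If $\rho$ is a genuine character of a Lagrangian $L\cong\f_2^\ell$, then $\sum_{x\in L}\rho(x)$ equals $|L|$ when $\rho$ is trivial and $0$ otherwise; it is never negative, so varying $\rho$ over $\hat L$ cannot produce ``Gauss sums of both signs,'' and your proposed mechanism for accommodating $\sgn(\tau)=-1$ when $|L|\ge 4$ collapses. (Indeed, your own description of the data $(L,\rho)$ is internally redundant: $\chi|_{L\times L}=1$ forces $\rho(x)^2=\chi(x,x)=1$. In Tambara's actual criterion the auxiliary function is not a character of $L$ and the compatibility with $\tau$ is more subtle.) Note also that negative $\tau$ must be realizable even for alternating $B$ with $\ell=1$ (the $\BC[Q]$ case of Example \ref{eg0}, where $|L|=2$), which contradicts your proposed ``flexibility exists exactly when $|L|\ge 4$'' explanation of the dichotomy. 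To repair the argument you should either carry out the fiber-functor construction with the correct form of Tambara's classification in each case, or simply cite \cite[Prop.~5.5]{Ta} as the paper does.
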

\begin{proof}
By \cite[Prop 5.5]{Ta}, $\CC$ has a fibration if, and only if, $\CC$ is not monoidally equivalent to $\TY(\f_2^2, B_0, -2)$. It follows from Proposition \ref{p:5.6} that the simple object $m$ in $\CC$ satisfies $\v(m)>0$ if, and only if, $\CC$ is not monoidally equivalent to $\TY(\f_2^2, B_0, -2)$. This proves the corollary.
\end{proof}
\subsection{Odd characteristic} In contrast to the characteristic two case, positivity of total indicators may not be sufficient for the existence of fibration of an integral Tambara-Yamagami category associated with an elementary $p$-group for odd $p$. We will demonstrate this fact by computing the total indicators.
By \cite[Ch. 4]{SerreACA}, there are exactly two isometric classes of non-degenerate symmetric bilinear form on an $r$-dimensional  $\f_p$-space. They are represented by $B_0$ and $B_1$ whose Gram matrix relative to the standard basis is given by
$$
\left[\begin{array}{c|c}
  I_{r-1}   & 0 \\ \hline
    0 & u
\end{array}\right]
$$
where $u$ can be any fixed quadratic nonresidue in $\f_p$, and $I_{r-1}$ denotes the identity matrix of rank $r-1$ . In particular, a non-degenerate symmetric bilinear form $B$ on $\f_p^r$ is determined by its \emph{discriminant} $\det(B)$ in $\f_p^\times/(\f_p^\times)^2$, where
$$
\det B  = \det\left([B(e_i, e_j)]_{ij}\right) \in \f_p^\times\,.
$$
 Therefore, the isometric class of a bilinear form $B$ is uniquely determined by the Legendre symbol $\Jac{\det B}{p}$ which is 1 if $\det B$ is a quadratic residue, and -1 otherwise.
 We can now compute the total indicators using the following formula obtained by Shimizu
\cite[Thm. 6.1]{Sh}: The $n$-th indicator of $m$ in $\TY(\f_p^r, \chi_B, \tau)$ for any non-degenerate symmetric bilinear form $B$ on $\f_p^r$ is given by
\begin{equation}\label{eq:5.2}
 \nu_n(m)=\left\{\begin{array}{ll}
   0 & \text{if  $n$ is odd},\medskip\\
   \sgn(\tau)^{k}\e_{p}^{r(k+1)}\Jac{-k}{p}^r \Jac{-2}{p}^{r(k+1)}
\Jac{\det B}{p}^{k+1} & \text{if }  n=2k \text{ and } p\nmid k,\medskip\\
\sgn(\tau)^{k}\e_{p}^{rk}\Jac{-2}{p}^{rk}
\Jac{\det B}{p}^{k} \sqrt{p^r} & \text{if }  n=2k \text{ and } p\mid k,\smallskip\\
\end{array}
 \right.
\end{equation}
where $\e_p=\sqrt{\Jac{-1}{p}}$.
\begin{proposition} \label{p:5.4}
Let $V=\f_p^{2\ell}$, $\chi_B$ the bicharacter associated with a non-degenerate bilinear form $B$ on $V$, $\tau$ a square root of $|V|\inv$, and $\CC=\mathcal{TY}(V, \chi_B,\tau)$.
\begin{enumerate}
  \item[(i)]  If $\Jac{-1}{p}^{\ell} \Jac{\det B}{p}  \sgn(\tau)=1$, then  $\FSexp(\CC) = 2p$ and $$\v(m)=p^{\ell}+(p-1) \sgn(\tau)\,. $$
  \item[(ii)] If $\Jac{-1}{p}^{\ell} \Jac{\det B}{p}\sgn(\tau)=-1$, then $\FSexp(\CC) = 4p$ and $\v(m)=0$.
\end{enumerate}
\end{proposition}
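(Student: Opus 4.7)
The plan is to specialize Shimizu's formula \eqref{eq:5.2} to $r=2\ell$ and then extract the Frobenius--Schur exponent and the total indicator by elementary bookkeeping. Since $r$ is even, the factors $\Jac{-k}{p}^{r}$ and $\Jac{-2}{p}^{r(k+1)}$ both collapse to $1$, while $\e_p^{r(k+1)}=\Jac{-1}{p}^{\ell(k+1)}$ because $\e_p^{2}=\Jac{-1}{p}$. Writing $\kappa:=\sgn(\tau)\Jac{-1}{p}^{\ell}\Jac{\det B}{p}$ for the sign appearing in the hypothesis, \eqref{eq:5.2} should reduce to
\[
\nu_{2k}(m)=\begin{cases}\sgn(\tau)\,\kappa^{k+1} & \text{if } p\nmid k,\\ p^{\ell}\,\kappa^{k} & \text{if } p\mid k,\end{cases}
\]
with $\nu_{n}(m)=0$ for odd $n$, together with $\sqrt{p^{r}}=p^{\ell}$.

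Next I would pin down $\FSexp(\CC)$. The pointed subcategory on the simple objects of $V$ has trivial associator by the definition of $\TY(V,\chi_B,\tau)$, so $\nu_{n}(a)=1$ exactly when $\ord(a)\mid n$; in particular $\nu_{n}(a)=d(a)=1$ for every $a\in V$ iff $p\mid n$. Combined with the requirement $\nu_{N}(m)=p^{\ell}=d(m)$, this pins down the exponent: in Case (i), $\kappa=1$ forces $N=2k$ with $p\mid k$, giving $\FSexp(\CC)=2p$; in Case (ii), $\kappa=-1$ introduces the factor $(-1)^{k}$ in the $p\mid k$ branch, which additionally forces $k$ to be even, giving $\FSexp(\CC)=4p$.

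For Case (i), the sum then consists of $p-1$ terms equal to $\sgn(\tau)$ (indices $k=1,\dots,p-1$) and one term equal to $p^{\ell}$ (index $k=p$), yielding $\v(m)=(p-1)\sgn(\tau)+p^{\ell}$. For Case (ii), the cancellation is the one point that requires some care: the two contributions from $p\mid k$ are $\nu_{2p}(m)+\nu_{4p}(m)=-p^{\ell}+p^{\ell}=0$, while the remaining sum is $-\sgn(\tau)\sum_{k}(-1)^{k}$ taken over $\{1,\dots,2p-1\}\setminus\{p\}$. This splits into two alternating sums over intervals $\{1,\dots,p-1\}$ and $\{p+1,\dots,2p-1\}$ of even length $p-1$, each of which vanishes. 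The main (and essentially only) obstacle is the sign bookkeeping needed to arrive at the clean formula for $\nu_{2k}(m)$ in terms of $\kappa$; once that is in place, both the exponent computation and the vanishing in Case (ii) follow mechanically.
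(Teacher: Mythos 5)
Your proposal is correct and follows essentially the same route as the paper: specialize Shimizu's formula \eqref{eq:5.2} to $r=2\ell$, absorb all the sign data into the single sign $\kappa=\sgn(\tau)\Jac{-1}{p}^{\ell}\Jac{\det B}{p}$ (the paper's $\ep$), use the vanishing/periodicity of $\nu_n(a)$ for $a\in V$ to force $p\mid\FSexp(\CC)$, and then read off the exponent and the total indicator from the two cases $\kappa=\pm1$. The simplified formula for $\nu_{2k}(m)$, the exponent values $2p$ and $4p$, and both sums (including the cancellation in case (ii)) all match the paper's computation.
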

\begin{proof} By \cite[Thm. 3.2]{Sh}, $\nu_n(a) = \delta_{na, 0}$ for $a \in V$. Therefore, $p
\mid \FSexp(\CC)$. To determine $\FSexp(\CC)$, it is enough to consider
the values $\nu_n(m)$ with $n=2k$ and $p \mid k$ by virtue of \eqref{eq:5.2}. Note that
$$
\e_p^{2\ell} \Jac{\det B}{p} = \Jac{-1}{p}^{\ell}
\Jac{\det B}{p} = \ep \sgn(\tau)
$$
for some $\ep=\pm 1$.  Therefore, \eqref{eq:5.2} becomes
\begin{equation}\label{eq:5.3}
 \nu_n(m)=\left\{\begin{array}{ll}
   0 & \text{if  $n$ is odd},\smallskip\\
   \sgn(\tau) \ep^{k+1} & \text{if }  n=2k \text{ and } p\nmid k,\smallskip\\
\ep^{k} p^{\ell}& \text{if }  n=2k \text{ and } p\mid k,\smallskip\\
\end{array}
 \right.
\end{equation}
If $\ep=1$, then $\nu_{2p}(m)=p^{\ell} = d(m)$ and so $\FSexp(\CC) =2p$. Moreover,
$$
 \v(m) = p^{\ell} +\sum_{k=1}^{p-1} \nu_{2k}(m)
  = p^{\ell}+\sgn(\tau) (p-1)\,.
$$
If $\ep=-1$, then $\nu_{2p}(m)=-p^{\ell}$ and $\nu_{4p}(m)=p^{\ell}$. Thus, $\FSexp(\CC)=4p$ and
$$
 \v(m) = \sum_{k=1}^{2p} \nu_{2k}(m) =0\,. \qedhere
$$
\end{proof}

\begin{corollary}\label{c:5.5}
Let  $B$ be a non-degenerate symmetric bilinear form on $V=\f_p^{2\ell}$ and $\tau = \pm p^{-\ell}$.
Then $\CC=\TY(V, \chi_B, \tau)$ admits a fibration if, and only if, $\v(s) \ge d(s)$ for all simple object $s \in \CC$.
\end{corollary}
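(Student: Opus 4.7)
The plan is to reduce the biconditional to a single inequality $\v(m) \ge d(m)$, read off from Proposition~\ref{p:5.4} when this holds, and match the resulting condition with the fibration criterion from \cite{Ta} via the classical theory of symmetric bilinear forms over $\f_p$.

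\emph{Step 1: Reduce to the object $m$.} For any $a \in V$ one has $d(a) = 1$ and, by \cite[Thm.~3.2]{Sh}, $\nu_n(a) = \delta_{na,0}$. Hence $\v(a) = N$ when $a = 0$ and $\v(a) = N/p$ when $a \ne 0$, where $N = \FSexp(\CC)$. Proposition~\ref{p:5.4} gives $N \in \{2p, 4p\}$, so $\v(a) \ge 2 > 1 = d(a)$ in every case. Consequently the hypothesis ``$\v(s) \ge d(s)$ for all simple $s$'' is equivalent to the single inequality $\v(m) \ge d(m) = p^\ell$.

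\emph{Step 2: Evaluate the indicator condition.} Let $\epsilon := \Jac{-1}{p}^\ell \Jac{\det B}{p}\sgn(\tau)$. When $\epsilon = -1$, Proposition~\ref{p:5.4}(ii) gives $\v(m) = 0 < p^\ell$. When $\epsilon = 1$, part (i) gives $\v(m) = p^\ell + (p-1)\sgn(\tau)$, which, since $p \ge 3$, satisfies $\v(m) \ge p^\ell$ if and only if $\sgn(\tau) = +1$. Therefore $\v(m) \ge d(m)$ if and only if $\sgn(\tau) = 1$ and $\Jac{-1}{p}^\ell \Jac{\det B}{p} = 1$.

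\emph{Step 3: Match with the fibration criterion.} By \cite[Prop.~5.5]{Ta}, $\CC = \TY(V, \chi_B, \tau)$ admits a fibration if and only if $\tau > 0$ and $(V, \chi_B)$ has a Lagrangian subgroup, i.e., a subgroup $L \le V$ with $|L|^2 = |V|$ on which $\chi_B$ is trivial---equivalently, an $\ell$-dimensional totally isotropic subspace of $(V, B)$. The classification of non-degenerate symmetric bilinear forms over $\f_p$ (odd $p$) says that such a Lagrangian subspace exists if and only if $B$ is the hyperbolic form of rank $2\ell$, which is in turn equivalent to $\Jac{\det B}{p} = \Jac{-1}{p}^\ell$, since the hyperbolic form has determinant $(-1)^\ell$ modulo squares. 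Hence a fibration exists if and only if $\sgn(\tau) = 1$ and $\Jac{-1}{p}^\ell \Jac{\det B}{p} = 1$, which is exactly the condition obtained in Step 2.

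The main obstacle is the bookkeeping in Step 3: one needs the correct statement of the fibration criterion from \cite{Ta} and the translation of Lagrangian existence into a discriminant condition. Both are standard but easy to misstate; no deeper ingredient is needed beyond Proposition~\ref{p:5.4} and the classical theory of quadratic forms.
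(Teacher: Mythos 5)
Your proof is correct and follows essentially the same route as the paper: reduce to the single inequality $\v(m)\ge d(m)$, evaluate it via Proposition~\ref{p:5.4}, and match the resulting condition $\sgn(\tau)=1$ and $\Jac{\det B}{p}=\Jac{-1}{p}^{\ell}$ against Tambara's fibration criterion for the hyperbolic form. The only discrepancy is bibliographic: the relevant criterion in the odd-characteristic case is \cite[Prop.~4.1]{Ta} (the paper uses \cite[Prop.~5.5]{Ta} only for the characteristic-two case), though your Lagrangian-subgroup formulation of it and the discriminant computation are correct.
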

\begin{proof}
  By the preceding proposition, for $s \in V$, $\v(s) \ge 2 > d(s)$. In particular, the inequality  holds for all $s \in V$ automatically. Therefore, we only need to consider the simple object $m$.

  By \cite[Prop. 4.1]{Ta}, $\CC$ admits a fibration if, and only if, $\tau=p^{-\ell}$ and $B$ is hyperbolic, i.e. the Gram matrix of $B$ relative to some basis of $\f_p^{2\ell}$ is of the form
  $\mtx{0 & I_\ell \\ \hline I_\ell & 0}$, or equivalently, $\Jac{\det B}{p} = \Jac{-1}{p}^\ell$.

  If $\CC$ admits a fibration, then $\Jac{\det B}{p} \Jac{-1}{p}^\ell = 1 =\sgn(\tau)$ and so $\v(m) = p^\ell + p-1 >  p^\ell =d(m)$ by Proposition \ref{p:5.4}.
  Conversely, if $\v(m)\ge p^\ell$, then, by Proposition \ref{p:5.4}, $\sgn(\tau) = 1$ and $\Jac{\det B}{p} \Jac{-1}{p}^\ell = 1$, or equivalently $\Jac{\det B}{p}  = \Jac{-1}{p}^\ell$. Therefore, $\CC$ admits a fibration by the preceding paragraph.
\end{proof}

\begin{remark} {\rm The corollary implies that there exists a genuine semisimple quasi-Hopf algebra of which the total indicators of its representations are all positive. Let $B$ be the bilinear form on $\f_p^{2\ell}$ whose Gram matrix relative to the standard basis is $\mtx{I_{2\ell-1} & 0\\ \hline 0 & (-1)^{\ell+1}}$. Then $\Jac{\det B}{p} \Jac{-1}{p}^\ell=-1$. If we take $\sgn(\tau)=-1$ or $\tau=-p^\ell$, then, by Corollary \ref{c:5.5} and Proposition \ref{p:5.4},  $\TY(\f_p^{2\ell}, \chi_B, \tau)$ is monoidally equivalent to $\Rep(H)$ of a \emph{genuine} quasi-Hopf algebra $H$ with positive total indicators. Therefore, in general, the existence of vanishing total indicators is not a necessary condition for a semisimple quasi-Hopf algebra being genuine.
}
\end{remark}


\end{document}